\newtheorem{thm}{Theorem}
\newtheorem*{thm*}{Theorem}
\newtheorem*{cor*}{Corollary}
\newtheorem{cor}{Corollary}
\newtheorem{lem}{Lemma}
\newtheorem{prop}{Proposition}
\newcommand{\ninf}{n\rightarrow\infty}
\newcommand{\kinf}{k\rightarrow\infty}
\newcommand{\xinf}{x\rightarrow+\infty}
\renewcommand{\Re}{\text{Re}}
\renewcommand{\Im}{\text{Im}}
\newcommand{\C}{\mathbb{C}}
\newcommand{\R}{\mathbb{R}}
\renewcommand{\l}{\left}
\renewcommand{\r}{\right}
\newcommand{\B}{\mathcal{B}}
\newcommand{\li}{\limits}
\newcommand{\as}{\text{ as }}
\begin{document}

\title[Weyl-Titchmarsh type formula for Hermite operator]{Weyl-Titchmarsh type formula for Hermite operator with small perturbation}

\author{Sergey Simonov}

\address{Department of Mathematical Physics, Institute of Physics, St. Petersburg University,
Ulianovskaia 1, 198904, St. Petergoff, St. Petersburg, Russia }
\email{sergey\_simonov@mail.ru}

\subjclass{47A10, 47B36} \keywords{Jacobi matrices, Absolutely
continuous spectrum, Subordinacy theory, Weyl-Titchmarsh theory}

\date{}

\begin{abstract}
Small perturbations of the Jacobi matrix with weights $\sqrt n$
and zero diagonal are considered. A formula relating the
asymptotics of polynomials of the first kind to the spectral
density is obtained, which is analogue of the classical
Weyl-Titchmarsh formula for the Schr\"{o}dinger operator on the
half-line with summable potential. Additionally a base of
generalized eigenvectors for "free" Hermite operator is studied
and asymptotics of Plancherel-Rotach type are obtained.
\end{abstract}

\maketitle
\section{Introduction}
Let $\{a_n\}_{n=1}^{\infty}$ be a sequence of positive numbers and
$\{b_n\}_{n=1}^{\infty}$ be a sequence of real numbers,
$\{e_n\}_{n=1}^{\infty}$ be the canonical basis in the space
$l^2(\mathbb N)$ (i.e., each vector $e_n$ has zero components
except the $n$-th which is $1$), let also $l_{fin}$ be the linear
set of sequences with finite number of non-zero components. One
can define an operator $\mathcal J$ in $l^2$, which acts in
$l_{fin}$ by the rule
\begin{equation*}
    \begin{array}{l}
      (\mathcal{J}u)_n=a_{n-1}u_{n-1}+b_nu_n+a_nu_{n+1},\ n\ge2, \\
      (\mathcal{J}u)_1=b_1u_1+a_1u_2. \\
    \end{array}
\end{equation*}
The operator is first defined on $l_{fin}$ and then the closure is
considered. Then $\mathcal J$ is self-adjoint in $l^2$ provided
$\sum\limits_{n=0}^{\infty}\frac1{a_n}=\infty$ \cite{Bz} (Carleman
condition), and it has the following matrix representation with
respect to the canonical basis:
\begin{equation*}
    \mathcal J=
    \left(%
    \begin{array}{cccc}
    b_1 & a_1 & 0 & \cdots \\

    a_1 & b_2 & a_2 & \cdots \\
    0 & a_2 & b_3 & \cdots \\
    \vdots & \vdots & \vdots & \ddots \\
    \end{array}%
    \right).
\end{equation*}
Consider the spectral equation for $\mathcal J$:
\begin{equation}\label{eq spectral general}
    a_{n-1}u_{n-1}+b_nu_n+a_nu_{n+1}=\lambda u_n,\ n\ge2.
\end{equation}
Solution $P_n(\lambda)$ of \eqref{eq spectral general} such that
$P_1(\lambda)\equiv1$, $P_2(\lambda)=\frac{\lambda-b_1}{a_1}$ is a
polynomial in $\lambda$ of degree $n-1$ and is called the
polynomial of the first kind. Correspondingly the solution
$Q_n(\lambda)$ such that $Q_1(\lambda)\equiv0$,
$Q_2(\lambda)\equiv\frac1{a_1}$ is a polynomial of degree $n-2$
and is called the polynomial of the second kind. For two solutions
of \eqref{eq spectral general} $u_n$ and $v_n$, the expression
\begin{equation*}
    W(u,v):=W(\{u_n\}_{n=1}^{\infty},\{v_n\}_{n=1}^{\infty}):=a_n(u_nv_{n+1}-u_{n+1}v_n)
\end{equation*}
is independent of $n$ and is called the (discrete) Wronskian of
$u$ and $v$. One always has
\begin{equation*}
    W(P(\lambda),Q(\lambda))\equiv1.
\end{equation*}The spectrum of
every Jacobi matrix is simple and the vector $e_1$ from the
standard basis is the generating vector \cite{Bz}. Let $dE$ be the
operator-valued spectral measure associated with $\mathcal J$.
Polynomials of the first kind are orthogonal with respect to the
measure $d\rho:=(dEe_1,e_1)$, which is also called the spectral
measure \cite{Ahiezer}. For non-real values of $\lambda$ solutions
of \eqref{eq spectral general} that belong to $l^2$ are
proportional to $Q_n(\lambda)+m(\lambda)P_n(\lambda)$
\cite{Ahiezer}, where
\begin{equation*}
    m(\lambda):=\int\limits_{\R}\frac{d\rho(x)}{x-\lambda},\
    \lambda\in\mathbb C\backslash\mathbb R
\end{equation*}
is the Weyl function. By Fatou's Theorem \cite{Koosis},
\begin{equation*}
    \rho'(\lambda)=\frac1{\pi}\Im\, m(\lambda+i0),
\end{equation*}
for a.a. $\lambda\in\mathbb R$.

In the present paper we consider small perturbations of the
operator $\mathcal J_0$, which is defined by the sequences
$\{\sqrt n\}_{n=1}^{\infty}$ and $\{0\}_{n=1}^{\infty}$:
\begin{equation*}
    \mathcal J_0=
    \left(%
    \begin{array}{cccc}
    0 & 1 & 0 & \cdots \\

    1 & 0 & \sqrt 2 & \cdots \\
    0 & \sqrt 2 & 0 & \cdots \\
    \vdots & \vdots & \vdots & \ddots \\
    \end{array}%
    \right).
\end{equation*}
Let us call $\mathcal J_0$ the "free" Hermite operator. We will
call (following \cite{Brown-Naboko-Weikard}) $\mathcal J$ the
Hermite operator, if it can be considered close to $\mathcal J_0$
in some sense. Let us call $\mathcal J$ the "small" perturbation
of $\mathcal J_0$, if $\mathcal J$ is defined by sequences
$\{a_n\}_{n=1}^{\infty}$ and $\{b_n\}_{n=1}^{\infty}$ such that
(let $c_n:=a_n-\sqrt n$)
\begin{equation}\label{eq conditions}
    c_n=o(\sqrt n)\as\ninf\text{ and }
    \sum_{n=1}^{\infty}\l(\frac{|c_n|}{n}+\frac{|c_{n+1}-c_n|+|b_n|}{\sqrt n}\r)<\infty.
\end{equation}
Denote the following expression by $\Lambda$: for any sequence
$\{u_n\}_{n=1}^{\infty}$,
\begin{equation}\label{eq definition of Lambda}
    \begin{array}{l}
        (\Lambda u)_n:=c_{n-1}u_{n-1}+b_nu_n+c_nb_{n+1},\ n\ge2, \\
        (\Lambda u)_1:=b_1u_1+c_1u_2. \\
    \end{array}
\end{equation}
Although $\Lambda$ is not a Jacobi matrix, we will write $\mathcal
J=\mathcal J_0+\Lambda$. The spectrum of $\mathcal J_0$ is purely
absolutely continuous on $\mathbb R$ with the spectral density
\begin{equation*}
    \rho_0'(\lambda)=\frac{e^{-\frac{\lambda^2}2}}{\sqrt{2\pi}}.
\end{equation*}
As it will be shown, the spectrum of $\mathcal J$ is also purely
absolutely continuous under assumption \eqref{eq conditions}.

Our goal in the present paper is to study the spectral density of
$\mathcal J$ using the asymptotic analysis of generalized
eigenvectors of $\mathcal J$ (i.e., solutions of the spectral
equation \eqref{eq spectral general}). The method is based upon
the comparison of solutions of \eqref{eq spectral general} to
solutions of the spectral equation for the free Hermite operator,
\begin{equation}\label{eq spectral equation J0}
    \sqrt{n-1}u_{n-1}+\sqrt nu_{n+1}=\lambda u_n,\ n\ge2.
\end{equation}
This is analogous to the Weyl-Titchmarsh theory for the
Schr\"{o}dinger operator on the half-line with the summable
potential. The following results will be proven (Theorem \ref{thm
result 1} in Section \ref{section free} and Theorem \ref{thm
result 2} in Section \ref{section perturbed}). Let $w$ be the
standard error function \cite{Abramowitz-Stegun}
\begin{equation}\label{eq error function}
    w(z):=\frac1{\pi i}\int_{\Gamma_z^-}\frac{e^{-\zeta^2}d\zeta}{\zeta-z}
    =-\frac1{\pi i}\int_{\Gamma_{-z}^+}\frac{e^{-\zeta^2}d\zeta}{\zeta+z},
\end{equation}
where the contours $\Gamma_z^{\pm}$ are shown on Figure \ref{fig
gamma pm z}. Function $w$ is entire.
\begin{figure}[h]
  \includegraphics{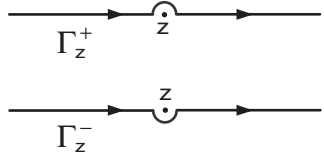}\\
  \caption{Contours $\Gamma_z^{\pm}$}
  \label{fig gamma pm z}
\end{figure}

\begin{thm}\label{thm result 1}
For every $\lambda\in\C$ equation \eqref{eq spectral equation J0}
has a basis of solutions
\begin{equation*}
    I_n^+(\lambda):=\frac{(-1)^{n-1}e^{\frac{\lambda^2}2}w^{(n-1)}\l(\frac{\lambda}{\sqrt 2}\r)}{\sqrt{(n-1)!2^{n+1}}}
\end{equation*}
and
\begin{equation*}
    I_n^-(\lambda):=\frac{e^{\frac{\lambda^2}2}w^{(n-1)}\l(-\frac{\lambda}{\sqrt 2}\r)}{\sqrt{(n-1)!2^{n+1}}},
\end{equation*}
which have the following asymptotics as $\ninf$:
    \begin{equation*}
        I^{\pm}_n(\lambda)
        =\frac{(\mp i)^{n-1} e^{\frac{\lambda^2}4\pm i\lambda\sqrt{n}}}{(8\pi n)^{1/4}}
        \l(1+O\l(\frac1{\sqrt n}\r)\r).
    \end{equation*}
These asymptotics are uniform with respect to $\lambda$ in every
bounded set in $\mathbb C$. Polynomials of the first kind for
$\mathcal J_0$ are related to $I_n^{\pm}$ in the following way:
\begin{equation*}
    {P_0}_n(\lambda)=I_n^+(\lambda)+I_n^-(\lambda).
\end{equation*}
\end{thm}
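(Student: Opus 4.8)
The plan is to prove the three assertions---that $I_n^{\pm}$ solve \eqref{eq spectral equation J0}, that they satisfy the stated asymptotics, and that ${P_0}_n=I_n^++I_n^-$---from the single differential identity $w'(\zeta)+2\zeta w(\zeta)=\frac{2i}{\sqrt\pi}$, which follows directly from \eqref{eq error function}.

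First I would check the recurrence. Differentiating $w'+2\zeta w=\frac{2i}{\sqrt\pi}$ repeatedly gives, for every $k\ge1$,
\begin{equation*}
    w^{(k+1)}(\zeta)+2\zeta w^{(k)}(\zeta)+2k\,w^{(k-1)}(\zeta)=0.
\end{equation*}
Writing $I_n^{\pm}=C_n^{\pm}w^{(n-1)}\l(\pm\frac{\lambda}{\sqrt2}\r)$ with the normalizing constants $C_n^{\pm}$ read off from the statement, I would set $\zeta=\pm\frac{\lambda}{\sqrt2}$ and $k=n-1$ (so that $2\zeta=\pm\sqrt2\,\lambda$) and divide by $C_n^{\pm}$. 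The elementary ratios $C_n^{\pm}/C_{n+1}^{\pm}=\mp\sqrt{2n}$ and $C_n^{\pm}/C_{n-1}^{\pm}=\mp 1/\sqrt{2(n-1)}$ turn the differentiated identity precisely into $\sqrt{n-1}\,I_{n-1}^{\pm}+\sqrt n\,I_{n+1}^{\pm}=\lambda I_n^{\pm}$, i.e. \eqref{eq spectral equation J0} for $n\ge2$. This step is purely computational.

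The principal obstacle is the asymptotics. I would start from the Cauchy-type representation of the derivatives, obtained by differentiating \eqref{eq error function} $n-1$ times under the integral sign,
\begin{equation*}
    w^{(n-1)}(z)=\frac{(n-1)!}{\pi i}\int_{\Gamma_z^-}\frac{e^{-\zeta^2}}{(\zeta-z)^n}\,d\zeta,\qquad z=\frac{\lambda}{\sqrt2},
\end{equation*}
and apply the saddle point method to the exponent $g(\zeta)=-\zeta^2-n\log(\zeta-z)$. Its critical points solve $2\zeta^2-2z\zeta+n=0$, so
\begin{equation*}
    \zeta_{\pm}=\frac{z}{2}\pm i\sqrt{\frac n2}\,\l(1+O\l(\frac1n\r)\r)
\end{equation*}
recede to $\pm i\infty$ like $\sqrt n$; this drift is exactly the mechanism producing the $e^{\pm i\lambda\sqrt n}$ oscillation and the $n^{-1/4}$ decay typical of Plancherel--Rotach asymptotics. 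I would deform $\Gamma_z^-$ onto the steepest descent path through the single saddle it can reach, compute $g$ there together with $g''(\zeta_{\pm})=-2+n/(\zeta_{\pm}-z)^2\to-4$, and combine the Gaussian factor $\sqrt{2\pi/(-g'')}=\sqrt{\pi/2}$ with Stirling's expansion of the $(n-1)!$ in the integral and the $\sqrt{(n-1)!}$ of the normalization. The delicate points are keeping the branch of $\log(\zeta-z)$ and the attendant power of $i$ correct, so that after Stirling the prefactor collapses exactly to $(\mp i)^{n-1}(8\pi n)^{-1/4}$; verifying that $\Gamma_z^-$ may be pushed onto the descent path with the discarded portions exponentially small; and checking that the relative error $O(n^{-1/2})$ stems from the subleading term of $\sqrt{2n-z^2}$ in the saddle value $g(\zeta_{\pm})$, which at the same time makes the estimate uniform for $\lambda$ in a bounded set, since saddles, descent path and remainder all depend analytically on $z$ on compacta. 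The solution $I_n^-$ is obtained from this by the substitution $z\mapsto-z$, which interchanges the two saddles and reverses the sign of the oscillation.

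Finally I would settle the basis property and the polynomial identity, using besides the above the symmetry relation $w(\zeta)+w(-\zeta)=2e^{-\zeta^2}$ (immediate from $w(\zeta)=e^{-\zeta^2}\mathrm{erfc}(-i\zeta)$). Evaluating the Wronskian at $n=1$, where $a_1=1$, and inserting $w'(\zeta)=\frac{2i}{\sqrt\pi}-2\zeta w(\zeta)$ gives
\begin{equation*}
    W(I^+,I^-)=I_1^+I_2^--I_2^+I_1^-=\frac{i\,e^{\frac{\lambda^2}2}}{\sqrt{2\pi}}\neq0
\end{equation*}
for every $\lambda\in\C$, so $I_n^+$ and $I_n^-$ are linearly independent and form a basis (the asymptotics above give the same constant in the limit $\ninf$, a useful consistency check). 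For the last claim both ${P_0}_n$ and $I_n^++I_n^-$ solve \eqref{eq spectral equation J0}, so it suffices to match the first two entries: the two identities yield $I_1^++I_1^-=1={P_0}_1$ and $I_2^++I_2^-=\lambda={P_0}_2$, and uniqueness of the solution of the recurrence determined by its first two values finishes the proof.
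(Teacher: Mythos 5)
Your proposal is correct, and its core coincides with the paper's: the asymptotics are obtained by steepest descent applied to the Cauchy-type integral for $w^{(n-1)}$, which is exactly what the paper does in its appendix. The differences are in packaging and in the algebraic steps. For the asymptotics, the paper does not work at fixed $z$ as you do; it rescales the argument as $\mu\sqrt{2n}$, proves a uniform Plancherel--Rotach type theorem (saddle at the inverse Zoukowski point $\varphi(\mu)$, Laplace method carried out in full detail over three ranges of the integration variable), and then recovers your statement by substituting $\mu=z/\sqrt{2n}$; your direct route is shorter but yields only what Theorem 1 needs, whereas the paper's detour produces a standalone scaled-asymptotics result it advertises separately. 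For the recurrence, the paper substitutes the integral representation of $w^{(n)}$ into the Hermite-type recurrence and integrates by parts, while you differentiate the ODE identity $w'+2\zeta w=2i/\sqrt{\pi}$ to get $w^{(k+1)}+2\zeta w^{(k)}+2kw^{(k-1)}=0$ and verify directly; these are equivalent, and your constant-ratio bookkeeping checks out. For the relation ${P_0}_n=I_n^++I_n^-$, the paper splits the contour in the integral representation of $H_n$ to decompose it into the two error-function solutions, whereas you invoke uniqueness for the three-term recurrence and match the initial values $n=1,2$ using $w(\zeta)+w(-\zeta)=2e^{-\zeta^2}$ and $w'=-2\zeta w+2i/\sqrt{\pi}$; your computation ($I_1^++I_1^-=1$, $I_2^++I_2^-=\lambda$) is correct and arguably more elementary. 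Finally, your Wronskian evaluation $W(I^+,I^-)=ie^{\lambda^2/2}/\sqrt{2\pi}$ agrees with the paper's separate lemma and gives a cleaner justification of the basis property than the paper's proof of the theorem itself, which merely asserts linear independence.
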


\begin{thm}\label{thm result 2}
    Let the conditions \eqref{eq conditions} hold for $\mathcal J$.
    Then
    \\
    1. For every $\lambda\in\overline{\mathbb C_+}$ there exists
    \begin{equation*}
        F(\lambda):=1+i\sqrt{2\pi}e^{-\frac{\lambda^2}2}\sum_{n=1}^{\infty}(\Lambda I^+(\lambda))_nP_n(\lambda)
    \end{equation*}
    (the Jost function), which is analytic function in $\C_+$ and continuous in $\overline{\C_+}$.
    \\
    2. Polynomials of the first kind have the following asymptotics as $\ninf$:
    \begin{itemize}
        \item
            For $\lambda\in\C_+$,
            \begin{equation*}
                    P_n(\lambda)=F(\lambda)I_n^-(\lambda)+o\l(\frac{e^{\Im \lambda\sqrt n}}{n^{1/4}}\r)\as\ninf,
            \end{equation*}
        \item
            For $\lambda\in\R$,
            \begin{equation*}
                P_n(\lambda)=F(\lambda)I_n^-(\lambda)+\overline{F(\lambda)}I_n^+(\lambda)+o(n^{-\frac14})\as\ninf.
            \end{equation*}
    \end{itemize}
3. The spectrum of $\mathcal J$ is purely absolutely continuous, and for a.a. $\lambda\in\mathbb R$
    \begin{equation*}
        \rho'(\lambda)=\frac{e^{-\frac{\lambda^2}2}}{\sqrt{2\pi}|F(\lambda)|^2}
    \end{equation*}
    (the Weyl-Titchmarsh type formula).
\end{thm}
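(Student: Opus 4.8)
The plan is to treat the spectral equation $(\mathcal J u)_n=\lambda u_n$ as an inhomogeneous version of the free equation \eqref{eq spectral equation J0}, written as $\sqrt{n-1}u_{n-1}+\sqrt n u_{n+1}-\lambda u_n=-(\Lambda u)_n$, and to use the basis $\{I_n^{\pm}\}$ from Theorem \ref{thm result 1} as reference solutions, exactly as one uses $e^{\pm i\sqrt\lambda x}$ in the continuous Weyl--Titchmarsh theory. First I would record the discrete Green (summation-by-parts) identity for the mixed Wronskian $W_0(P,I^+)_n:=\sqrt n\l(P_nI_{n+1}^+-P_{n+1}I_n^+\r)$, namely $W_0(P,I^+)_n-W_0(P,I^+)_{n-1}=I_n^+(\Lambda P)_n$, whose symmetric counterpart (from the self-adjointness of the banded $\Lambda$) replaces $I_n^+(\Lambda P)_n$ by $(\Lambda I^+)_nP_n$ up to a telescoping boundary term. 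A direct computation from the asymptotics of Theorem \ref{thm result 1} gives the constant Wronskian $W(I^+,I^-)=\frac{i}{\sqrt{2\pi}}e^{\lambda^2/2}$, so that $1/W(I^-,I^+)=i\sqrt{2\pi}e^{-\frac{\lambda^2}2}$ is precisely the normalisation appearing in $F$.

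The analytic core is an a priori bound on $P_n(\lambda)$, uniform for $\lambda$ in bounded subsets of $\overline{\C_+}$, of the form $|P_n(\lambda)|\le C n^{-1/4}e^{\Im\lambda\sqrt n}$. In $\C_+$ the solution $I^-$ is dominant and $I^+$ subdominant (by the factor $e^{\pm i\lambda\sqrt n}$ in Theorem \ref{thm result 1}), so I would construct a fundamental system $\psi_n^{\pm}$ with $\psi_n^{\pm}=I_n^{\pm}(1+o(1))$ by solving discrete Volterra equations with kernel $\frac{I_k^+I_n^--I_k^-I_n^+}{W(I^+,I^-)}$, the equation for the subdominant $\psi^+$ being summed from infinity. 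The summability hypotheses \eqref{eq conditions} are exactly what makes these contractions and yields, by a discrete Levinson/Gronwall argument, solutions with the controlled growth above, analytic in $\C_+$ and continuous up to $\R$; expressing $P_n$ as the fixed linear combination of $\psi_n^{\pm}$ fitting its initial data then gives the bound. This step is the main obstacle: obtaining the estimates uniformly up to the real axis, where neither $I^+$ nor $I^-$ decays and both oscillate like $n^{-1/4}$, and packaging the three mixed coefficients $c_{n-1},c_n,b_n$ into the single summable quantity controlled by \eqref{eq conditions}.

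Granting these bounds, the series $\sum_n(\Lambda I^+(\lambda))_nP_n(\lambda)$ converges absolutely and uniformly on bounded sets, so $F$ is well defined, analytic in $\C_+$ and continuous on $\overline{\C_+}$, which is Part 1. Passing to the limit $\ninf$ in the Green identity shows that $W_0(P,I^+)_n$ converges, and normalising by $W(I^-,I^+)$ identifies $F(\lambda)$ as the coefficient of the dominant solution $I_n^-$ in the expansion of $P_n$; the summand $1$ is the unperturbed value (for $\Lambda=0$ one has ${P_0}_n=I_n^++I_n^-$, whence the coefficient is $1$), while $i\sqrt{2\pi}e^{-\frac{\lambda^2}2}$ supplies the normalisation of the correction. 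The asymptotics of Part 2 follow at once: in $\C_+$ the subdominant contribution is absorbed into $o\l(e^{\Im\lambda\sqrt n}n^{-1/4}\r)$, and on $\R$, using $\overline{I_n^+}=I_n^-$ (immediate from the definitions of $I^{\pm}$ and the error function $w$) together with the reality of $P_n(\lambda)$ for real $\lambda$, the coefficient of $I_n^+$ is forced to equal $\overline{F(\lambda)}$.

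For Part 3 I would invoke the subordinacy theory for Jacobi matrices. On $\R$ both $I_n^{\pm}$ decay only like $n^{-1/4}$, so $\sum|I_n^{\pm}|^2=\infty$ and, wherever $F\neq0$, no solution of \eqref{eq spectral general} is subordinate; hence there are no eigenvalues and the singular part is absent, so the spectrum is purely absolutely continuous on $\R$. To compute the density I note that for $\lambda\in\C_+$ the $l^2$ solution $Q_n+m(\lambda)P_n$ must be proportional to the subdominant $\psi^+$, so its coefficient of $I_n^-$ vanishes; writing $Q_n\sim GI_n^-+BI_n^+$ and $P_n\sim FI_n^-+AI_n^+$ this gives $m(\lambda)=-G(\lambda)/F(\lambda)$. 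Combining $W(P,Q)\equiv1$ with $W(I^+,I^-)=\frac{i}{\sqrt{2\pi}}e^{\lambda^2/2}$ yields $BF-GA=i\sqrt{2\pi}e^{-\frac{\lambda^2}2}$; taking boundary values on $\R$ and using the reality relations $A=\overline F$, $B=\overline G$ gives $\Im\l(\overline G F\r)=\frac12\sqrt{2\pi}e^{-\frac{\lambda^2}2}$, so that by Fatou's theorem $\rho'(\lambda)=\frac1\pi\Im m(\lambda+i0)=\frac1\pi\frac{\Im(\overline G F)}{|F|^2}=\frac{e^{-\frac{\lambda^2}2}}{\sqrt{2\pi}\,|F(\lambda)|^2}$ for a.a. $\lambda\in\R$, the zeros of $F$ on $\R$ forming a negligible set.
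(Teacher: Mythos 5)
Your proposal reaches all three parts by sound arguments, but its route differs from the paper's at two structural points. For the central a priori estimate the paper never constructs a Jost-type fundamental system: it derives a single variation-of-parameters identity for $P_n$ itself (Lemma \ref{lem variation of parameters formula}), a Volterra equation with kernel summed from $k=1$ to $n-1$ and inhomogeneity $\frac{\sqrt{n-1}}{a_{n-1}}{P_0}_n$, and solves it in the weighted space $\mathcal B$ via the elementary bounds $\|\mathcal V\|_{\mathcal B}\le\nu$ and $\|(I-\mathcal V)^{-1}\|_{\mathcal B}\le e^{\nu}$ (Lemma \ref{lem bounds for abstract Volterra operator}); the Jost function then appears as the limit of the coefficient $F_n(\lambda)$ of $I_n^-$ read off from that same identity, which yields the series formula for $F$ and the asymptotics of Part 2 in one stroke. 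Your plan --- build $\psi^{\pm}$ with the subdominant $\psi^+$ obtained from a Volterra equation summed from infinity, then expand $P_n$ in this system --- is precisely the Levinson/Benzaid--Lutz-type route that the paper mentions in its introduction and deliberately avoids. It is workable: the kernel estimates you need reduce to the same packaging $(\Lambda I^{\pm})_k=O\l(\l(|b_k|+|c_k-c_{k-1}|+\frac{|c_k|}{\sqrt k}\r)k^{-1/4}e^{\mp\Im\lambda\sqrt k}\r)$ as in \eqref{eq estimate for Lamba I^pm}, and on $\R$ all exponentials are unimodular, so uniformity up to the axis does hold. Moreover it buys you something the paper has to work for separately: the asymptotics of the second-kind polynomials $Q_n$ come for free, since any solution is a combination of $\psi^{\pm}$, whereas the paper obtains them by the cropped-matrix trick ($a_1Q_n$ are the first-kind polynomials of $\mathcal J_1$, which again satisfies \eqref{eq conditions}, so Lemma \ref{lem asymptotics of polynomials} applies). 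Conversely, the paper's scheme needs only one contraction estimate, tied to initial data, and gives uniformity on bounded subsets of all of $\C$. Your evaluation $W(I^+,I^-)=\frac{i}{\sqrt{2\pi}}e^{\lambda^2/2}$ agrees with the Wronskian lemma of Section \ref{section free}; computing it from the asymptotics is legitimate because the Wronskian is $n$-independent, though the paper gets it exactly from the values at $n=1,2$ and error-function identities.

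One point in Part 3 needs tightening. You end with the hedge that ``the zeros of $F$ on $\R$ form a negligible set.'' As written this is both unjustified and insufficient: $F$ is only known to be continuous on $\R$, not analytic across it, so its real zero set could a priori be any closed set, possibly of positive measure; and even a closed null set of zeros could in principle carry embedded eigenvalues or singular continuous spectrum, so ``purely absolutely continuous'' cannot be concluded while tolerating zeros --- indeed your own subordinacy argument is stated only ``wherever $F\neq0$,'' and at a real zero of $F$ the polynomial $P_n$ would be $o(n^{-1/4})$, hence a candidate subordinate solution. The fix is already contained in your computation: the identity $\Im\l(\overline{G(\lambda)}F(\lambda)\r)=\frac{\sqrt{2\pi}}2e^{-\frac{\lambda^2}2}$ holds at \emph{every} real $\lambda$ by continuity, and its right-hand side never vanishes; this forces $F(\lambda)\neq0$ (and $G(\lambda)\neq0$) for all $\lambda\in\R$, and more to the point it shows the asymptotic coefficient determinant $F\overline G-\overline FG$ never vanishes, which is what actually excludes subordinate solutions for every nontrivial combination of $P$ and $Q$. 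This is exactly how the paper closes the argument in Lemma \ref{lem formula for the spectral density}: from $F_1\overline F-\overline{F_1}F=-i\sqrt{2\pi}e^{-\frac{\lambda^2}2}$ it concludes that $F$ and $F_1$ have no zeros in $\overline{\C_+}$, so that $m(\lambda+i0)=-F_1(\lambda)/F(\lambda)$ is finite and continuous on the whole real line, and then invokes \cite{KP}. Replace ``negligible'' by ``empty,'' justified by your identity, and your argument closes.
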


The idea of the Weyl-Titchmarsh type formula is the relation
between the spectral density and the behavior of $P_n(\lambda)$
for large values of $n$. We can formulate this in the form of the
corollary.

\begin{cor}\label{cor result}
Let the conditions \eqref{eq conditions} hold for $\mathcal J$.
Then the spectrum of $\mathcal J$ is purely absolutely continuous
and the spectral density equals for a.a. $\lambda\in\mathbb R$
\begin{equation*}
    \rho'(\lambda)=\frac1{\pi}\lim_{\ninf}\frac1{\sqrt n(P_n^2(\lambda)+P_{n+1}^2(\lambda))},
\end{equation*}
the right-hand side being finite and non-zero for every
$\lambda\in\R$.
\end{cor}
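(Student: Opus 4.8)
The plan is to derive the corollary from part 3 of Theorem \ref{thm result 2}, which already gives $\rho'(\lambda)=\frac{e^{-\lambda^2/2}}{\sqrt{2\pi}|F(\lambda)|^2}$ for a.a. $\lambda$. It therefore suffices to evaluate the limit on the right-hand side and to check that it equals $\frac{e^{-\lambda^2/2}}{\sqrt{2\pi}|F(\lambda)|^2}$ for \emph{every} real $\lambda$; then the two expressions coincide a.e., and the stated pointwise finiteness and non-vanishing will follow from the analysis of $F$.

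First I would insert the asymptotics. For real $\lambda$ one has $\overline{I_n^+(\lambda)}=I_n^-(\lambda)$ (immediate from $\overline{w(x)}=w(-x)$ on $\R$), so the real-axis formula in part 2 of Theorem \ref{thm result 2} reads $P_n(\lambda)=2\Re\l(F(\lambda)I_n^-(\lambda)\r)+o(n^{-1/4})$. Substituting the Plancherel--Rotach asymptotics of Theorem \ref{thm result 1} gives
\begin{equation*}
    P_n(\lambda)=\frac{2|F(\lambda)|e^{\lambda^2/4}}{(8\pi n)^{1/4}}\cos\varphi_n+o\l(n^{-1/4}\r),\quad \varphi_n:=\arg F(\lambda)+\frac{\pi(n-1)}2-\lambda\sqrt n .
\end{equation*}
Squaring (the cross term between the main term $O(n^{-1/4})$ and the remainder is $o(n^{-1/2})$) and multiplying by $\sqrt n$ yields
\begin{equation*}
    \sqrt n\,P_n^2(\lambda)=\frac{4|F(\lambda)|^2e^{\lambda^2/2}}{\sqrt{8\pi}}\cos^2\varphi_n+o(1),
\end{equation*}
and the analogue for $P_{n+1}$ carries the same limiting prefactor since $\sqrt{n/(n+1)}\to1$.

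The key step is that summing two consecutive squares removes the oscillation. Since $\varphi_{n+1}-\varphi_n=\frac\pi2-\lambda(\sqrt{n+1}-\sqrt n)\to\frac\pi2$, we obtain $\cos^2\varphi_{n+1}=\sin^2\varphi_n+o(1)$, hence $\cos^2\varphi_n+\cos^2\varphi_{n+1}\to1$. Therefore
\begin{equation*}
    \sqrt n\l(P_n^2(\lambda)+P_{n+1}^2(\lambda)\r)\to\frac{4|F(\lambda)|^2e^{\lambda^2/2}}{\sqrt{8\pi}}=\frac{2|F(\lambda)|^2e^{\lambda^2/2}}{\sqrt{2\pi}},
\end{equation*}
so the right-hand side of the corollary equals $\frac1\pi\cdot\frac{\sqrt{2\pi}}{2|F(\lambda)|^2e^{\lambda^2/2}}=\frac{e^{-\lambda^2/2}}{\sqrt{2\pi}|F(\lambda)|^2}$, matching part 3. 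This computation is valid for every $\lambda\in\R$, so the limit exists pointwise.

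It remains to show the limit is finite and non-zero for every $\lambda\in\R$, i.e. $0<|F(\lambda)|<\infty$. Finiteness is immediate from the continuity of $F$ on $\overline{\C_+}$ (part 1). The non-vanishing is the one genuinely non-routine point. Here I would use the Jost solution $u^-$ of the equation for $\mathcal J$ constructed in Section \ref{section perturbed}, with $u_n^-=I_n^-(\lambda)(1+o(1))$; for real $\lambda$ its conjugate $u^+:=\overline{u^-}$ is a second solution, and $W(u^-,u^+)\neq0$ (the value $\frac{-ie^{\lambda^2/2}}{\sqrt{2\pi}}$ follows from the asymptotics of Theorem \ref{thm result 1}), so $\{u^-,u^+\}$ is a basis of the solution space. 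Matching asymptotics shows that the real solution $P$ decomposes exactly as $P=F(\lambda)u^-+\overline{F(\lambda)}\,u^+$; were $F(\lambda)=0$ this would force $P\equiv0$, contradicting $P_1(\lambda)=1$. Hence $F(\lambda)\neq0$ for all real $\lambda$, which completes the proof. The main obstacle is precisely this everywhere (as opposed to almost-everywhere) non-vanishing of the Jost function, together with keeping the phase bookkeeping in the oscillatory terms under control.
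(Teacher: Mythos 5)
Your evaluation of the limit itself is correct and is essentially the paper's own computation: the paper substitutes the asymptotics \eqref{eq asymptotics R} and \eqref{eq asymptotics of In^pm} and asserts in one line that
\begin{equation*}
    P_n^2(\lambda)+P_{n+1}^2(\lambda)=\frac{4|F(\lambda)|^2e^{\frac{\lambda^2}2}}{\sqrt{8\pi n}}+o\l(\frac1{\sqrt n}\r)\as\ninf,
\end{equation*}
and your phase bookkeeping ($\varphi_{n+1}-\varphi_n\rightarrow\frac{\pi}2$, hence $\cos^2\varphi_n+\cos^2\varphi_{n+1}\rightarrow1$) is exactly the justification hiding behind that line; the constants also match.

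The gap is in your argument that $F(\lambda)\neq0$ for every $\lambda\in\R$ (which is indeed the one non-routine point, since Theorem \ref{thm result 2} as stated gives the density formula only a.e.). You invoke a Jost solution $u^-$ of the \emph{perturbed} equation with $u_n^-=I_n^-(\lambda)(1+o(1))$, ``constructed in Section \ref{section perturbed}''. No such solution is constructed anywhere in the paper: Section \ref{section perturbed} solves a Volterra equation for the polynomials $P_n$, i.e.\ for the solution fixed by its first two components, and never produces a solution of the perturbed equation prescribed by its behavior at infinity. For real $\lambda$ such existence is not automatic and cannot be read off from the statements of Theorems \ref{thm result 1} and \ref{thm result 2} (which concern one solution only); it requires a separate Volterra-type construction ``from infinity'', an argument comparable in weight to Lemmas \ref{lem variation of parameters formula}--\ref{lem asymptotics of polynomials}. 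Granting the existence of $u^-$, the rest of your reasoning does go through (with a small additional step showing that $(\alpha-F)I_n^-+(\beta-\overline{F})I_n^+=o(n^{-\frac14})$ forces $\alpha=F$, $\beta=\overline{F}$), but existence is precisely the missing ingredient.

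The paper closes this point without any new solutions, and you could repair your proof the same way: the polynomials $a_1Q_n(\lambda)$ of the second kind are the polynomials of the first kind for the cropped matrix $\mathcal J_1$, which again satisfies \eqref{eq conditions}; applying Lemma \ref{lem asymptotics of polynomials} to $\mathcal J_1$ gives $Q_n(\lambda)=F_1(\lambda)I_n^-(\lambda)+\overline{F_1(\lambda)}I_n^+(\lambda)+o(n^{-\frac14})$ on $\R$ for some $F_1$ continuous in $\overline{\C_+}$. Substituting both asymptotics into the identity $W(P,Q)=1$ and using $W(I^+,I^-)=i\frac{e^{\frac{\lambda^2}2}}{\sqrt{2\pi}}$ yields
\begin{equation*}
    F_1(\lambda)\overline{F(\lambda)}-\overline{F_1(\lambda)}F(\lambda)=-i\sqrt{2\pi}e^{-\frac{\lambda^2}2}\neq0,
\end{equation*}
which forces $F(\lambda)\neq0$ for all real $\lambda$. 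In effect the pair $(P,Q)$ plays the role of your basis $(u^-,u^+)$, and its existence is already secured by the results proved in the paper.
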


Theorem \ref{thm result 2} can be proven by another method, based
on the Levinson-type analytical and smooth theorem, cf.
\cite{Coddington-Levinson} and papers of Bernzaid-Lutz
\cite{Bernzaid-Lutz}, Janas-Moszy\'{n}ski \cite{JM1} and Silva
\cite{Silva uniform}, \cite{Silva smooth}. None of their results
is directly applicable here, and the approach of the present paper
is different.

The considered situation is parallel to the Weyl-Titchmarsh theory
for Schr\"{o}dinger operator on the half-line with summable
potential. Let $q$ be a real-valued function on $\mathbb R_+$ and
$q\in L_1(\mathbb R_+)$. Consider the Schr\"{o}dinger operator on
$\mathbb R_+$
\begin{equation*}
    \mathcal L=-\frac{d^2}{dx^2}+q(x)
\end{equation*}
with the Dirichlet boundary condition. The purely absolutely
continuous spectrum of $\mathcal L$ coincides with $\mathbb{R}_+$
\cite{Titchmarsh}. Let $\varphi(x,\lambda)$ be a solution of the
spectral equation for $\mathcal L$,
\begin{equation*}
    -u''(x,\lambda)+q(x)u(x,\lambda)=\lambda u(x,\lambda),
\end{equation*}
such that $\varphi(0,\lambda)\equiv0$,
$\varphi'(0,\lambda)\equiv1$ (satisfying the boundary condition).
The following result holds \cite{Titchmarsh}.

\begin{prop}
    If $q\in L_1(\mathbb R_+)$, then for every $k>0$ there exist
    $a(k)$ and $b(k)$ such that
    \begin{equation*}
        \varphi(x,k^2)=a(k)\cos(kx)+b(k)\sin(kx)+o(1)\as\xinf,
    \end{equation*}
    and for a.a. $\lambda>0$
    \begin{equation*}
        \rho'(\lambda)=\frac1{\pi \sqrt{\lambda} (a^2(\sqrt{\lambda})+b^2(\sqrt{\lambda}))}
    \end{equation*}
    (the classical Weyl-Titchmarsh formula).
\end{prop}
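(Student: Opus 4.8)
The plan is to build $\varphi$ out of the Jost solution and then read off both the asymptotics and the spectral density from the value of the Jost solution at the origin. First I would construct, for each real $k>0$, the \emph{Jost solution} $f(x,k)$ of $-u''+qu=k^2u$ characterized by $f(x,k)=e^{ikx}(1+o(1))$ as $\xinf$. This is done by rewriting the equation as the Volterra integral equation
\begin{equation*}
    f(x,k)=e^{ikx}+\int_x^{\infty}\frac{\sin(k(t-x))}{k}q(t)f(t,k)\,dt
\end{equation*}
and solving it by successive approximations. Since $q\in L_1(\R_+)$, the Neumann series converges uniformly and yields $f(x,k)=e^{ikx}(1+o(1))$ together with $f'(x,k)=ike^{ikx}(1+o(1))$ as $\xinf$; the same estimates give continuity of $F(k):=f(0,k)$ and $f'(0,k)$ in $k$ (indeed analyticity for $\Im k>0$).

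Next, since $\varphi(\cdot,k^2)$ is a real solution while $f(\cdot,k)$ and $\overline{f(\cdot,k)}=f(\cdot,-k)$ are linearly independent (their Wronskian is the constant $W(f,\bar f)=-2ik\neq0$), I would write $\varphi=\alpha(k)f+\overline{\alpha(k)}\,\bar f$. Evaluating the Wronskian at $x=0$ with $\varphi(0)=0$, $\varphi'(0)=1$ gives $\alpha(k)=\overline{F(k)}/(2ik)$. Letting $\xinf$ and inserting the asymptotics of $f$ then produces
\begin{equation*}
    \varphi(x,k^2)=2\Re\l(\alpha(k)e^{ikx}\r)+o(1)=a(k)\cos(kx)+b(k)\sin(kx)+o(1),
\end{equation*}
with $a(k)=-\Im F(k)/k$ and $b(k)=\Re F(k)/k$, so that $a^2(k)+b^2(k)=|F(k)|^2/k^2$. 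The formula $\alpha(k)=\overline{F(k)}/(2ik)$ also shows $F(k)\neq0$ for $k>0$, since $F(k)=0$ would force $\alpha\equiv0$ and hence $\varphi\equiv0$, contradicting $\varphi'(0)=1$; thus $a^2+b^2$ is finite and non-zero.

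For the spectral density I would identify the Weyl $m$-function. For $\Im\lambda>0$ the $L_2$ solution is proportional to the Jost solution $f(\cdot,k)$ with $\lambda=k^2$, $\Im k>0$; expanding $f$ in the fundamental system $\{\theta,\varphi\}$ with $\theta(0)=1,\theta'(0)=0$ gives $m(\lambda)=f'(0,k)/f(0,k)$. Taking boundary values as $\Im\lambda\downarrow0$ and invoking the Fatou-type formula $\rho'(\lambda)=\frac1{\pi}\Im m(\lambda+i0)$ yields, for $k>0$,
\begin{equation*}
    \rho'(k^2)=\frac1{\pi}\Im\frac{f'(0,k)}{f(0,k)}=\frac1{\pi}\frac{\Im\l(f'(0,k)\overline{f(0,k)}\r)}{|F(k)|^2}.
\end{equation*}
Evaluating $W(f,\bar f)=-2ik$ at $x=0$ gives $\Im(f'(0,k)\overline{f(0,k)})=k$, whence $\rho'(k^2)=k/(\pi|F(k)|^2)=1/(\pi\sqrt\lambda(a^2+b^2))$, the claimed Weyl-Titchmarsh formula.

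The main obstacle is the passage to the real axis: one must justify that $f(0,k)$ and $f'(0,k)$, a priori controlled for $\Im k\ge0$, extend continuously to real $k>0$ and that the boundary value $m(\lambda+i0)$ is correctly represented there by $f'(0,k)/f(0,k)$. This needs uniform control of the Volterra iteration up to the real axis (again supplied by $q\in L_1$) and the limit-point classification of $\mathcal L$, which guarantees that the $L_2$ solution is one-dimensional and genuinely matches $f$. The remaining steps — convergence of the Neumann series, the Wronskian computations, and the reading-off of $a$ and $b$ — are routine once the Jost solution is in hand.
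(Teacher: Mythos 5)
Your proposal is correct, but there is nothing in the paper to compare it against: the paper does not prove this Proposition at all, it quotes it as a classical fact with a reference to Titchmarsh's book, as motivation for the discrete theory it then develops. Your argument is the standard Jost-solution route, and the details check out: the Volterra equation $f(x,k)=e^{ikx}+\int_x^\infty \frac{\sin(k(t-x))}{k}\,q(t)f(t,k)\,dt$ does solve $-f''+qf=k^2f$ with $f=e^{ikx}(1+o(1))$, the Wronskian $W(f,\bar f)=-2ik$ gives $\alpha(k)=\overline{F(k)}/(2ik)$ and hence $a(k)=-\Im F(k)/k$, $b(k)=\Re F(k)/k$, $a^2+b^2=|F(k)|^2/k^2$; the limit-point identification $m(\lambda)=f'(0,k)/f(0,k)$ together with $\Im\bigl(f'(0,k)\overline{f(0,k)}\bigr)=k$ and Fatou's theorem yields $\rho'(k^2)=k/(\pi|F(k)|^2)$, which is the stated formula. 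The two caveats you flag (continuity of $f(0,k)$, $f'(0,k)$ up to the real axis, and the limit-point property guaranteeing the match between the $L_2$ solution and $f$) are exactly the places where $q\in L_1$ enters, and both are handled by the same Volterra iteration. What is worth noting is that your proof is precisely the continuous-variable blueprint of what the paper does in the discrete setting: its variation-of-parameters formula and Volterra estimates (Lemmas 2--4) construct the analogue of your Jost expansion, giving $P_n(\lambda)=F(\lambda)I_n^-(\lambda)+\overline{F(\lambda)}I_n^+(\lambda)+o(n^{-1/4})$; the Wronskian identity $F_1\overline{F}-\overline{F_1}F=-i\sqrt{2\pi}e^{-\lambda^2/2}$ plays the role of your $\Im\bigl(f'(0,k)\overline{f(0,k)}\bigr)=k$; the Weyl function is identified as $m=-F_1/F$, with the cropped matrix $\mathcal J_1$ standing in for your derivative data at the origin; and the Khan--Pearson subordinacy result replaces your appeal to Fatou's theorem. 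So your write-up is a faithful proof of the cited Proposition and simultaneously a good summary of the scheme the paper transplants to Jacobi matrices.
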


Solutions $I_n^+(\lambda)$ and $I_n^-(\lambda)$ are the direct
analogues to the solutions $\frac{e^{ikx}}{2ik}$ and
$\frac{e^{-ikx}}{-2ik}$ of the spectral equation for "free"
Schr\"{o}dinger operator,
\begin{equation*}
    -u''(x,k^2)=k^2u(x,k^2).
\end{equation*}
The main technical difficulty of our problem is non-triviality of
solutions $I_n^{\pm}(\lambda)$ compared to $\frac{e^{\pm
ikx}}{\pm 2ik}$. The model of the Hermite operator was studied in
the paper of Brown-Naboko-Weikard \cite{Brown-Naboko-Weikard}, but
solutions $I_n^{\pm}(\lambda)$ were not introduced there.

\section{The free Hermite operator}\label{section free}
In this section we study asymptotic properties of generalized
eigenvectors for $J_0$ and prove Theorem \ref{thm result 1}. Let
us give its formulation again.

\begin{thm*}
For every $\lambda\in\C$ equation \eqref{eq spectral equation J0}
has a basis of solutions
\begin{equation}\label{eq In^+}
    I_n^+(\lambda):=\frac{(-1)^{n-1}e^{\frac{\lambda^2}2}w^{(n-1)}\l(\frac{\lambda}{\sqrt 2}\r)}{\sqrt{(n-1)!2^{n+1}}}
\end{equation}
and
\begin{equation}\label{eq In^-}
    I_n^-(\lambda):=\frac{e^{\frac{\lambda^2}2}w^{(n-1)}\l(-\frac{\lambda}{\sqrt 2}\r)}{\sqrt{(n-1)!2^{n+1}}},
\end{equation}
which have the following asymptotics as $\ninf$:
    \begin{equation}\label{eq asymptotics of In^pm}
        I^{\pm}_n(\lambda)
        =\frac{(\mp i)^{n-1} e^{\frac{\lambda^2}4\pm i\lambda\sqrt{n}}}{(8\pi n)^{1/4}}
        \l(1+O\l(\frac1{\sqrt n}\r)\r).
    \end{equation}
These asymptotics are uniform with respect to $\lambda$ in every
bounded set in $\mathbb C$. Polynomials of the first kind for
$\mathcal J_0$ are related to $I_n^{\pm}$ in the following way:
\begin{equation}\label{eq relation between P and In^pm}
    {P_0}_n(\lambda)=I_n^+(\lambda)+I_n^-(\lambda).
\end{equation}
\end{thm*}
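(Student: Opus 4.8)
The plan is to establish the three assertions — the solution property, the asymptotics, and the relation to ${P_0}_n$ — in that order, since each builds on the previous. First I would verify that $I_n^{\pm}(\lambda)$ solve the recurrence \eqref{eq spectral equation J0}. The error function $w$ satisfies the first-order ODE $w'(z)=2zw(z)-\frac{2}{\sqrt\pi}$ (equivalently $w'(z)=-2zw(z)+\frac{2i}{\sqrt\pi}$ depending on the normalization in \eqref{eq error function}); differentiating this identity $n-1$ times via the Leibniz rule yields a three-term recurrence among $w^{(n)}$, $w^{(n-1)}$, and $w^{(n-2)}$. After substituting $z=\pm\lambda/\sqrt2$ and inserting the normalizing factors $\sqrt{(n-1)!\,2^{n+1}}$, this recurrence should collapse exactly to $\sqrt{n-1}\,u_{n-1}+\sqrt{n}\,u_{n+1}=\lambda u_n$. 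This is a direct computation and not the main obstacle, though the choice of normalization constants is precisely what makes the coefficients match, so I would track those carefully.

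Next I would derive the asymptotics \eqref{eq asymptotics of In^pm}, which is the technical heart of the theorem. The quantity $w^{(n-1)}(z)$ is a high-order derivative of an entire function, so the natural tool is the Cauchy integral representation $w^{(n-1)}(z)=\frac{(n-1)!}{2\pi i}\oint \frac{w(\zeta)\,d\zeta}{(\zeta-z)^n}$, or better, to exploit the integral formula \eqref{eq error function} for $w$ directly and differentiate under the integral sign. This reduces the problem to a contour integral of the form $\int_{\Gamma}\frac{e^{-\zeta^2}\,d\zeta}{(\zeta\mp\lambda/\sqrt2)^{n}}$ (up to factorial and power-of-two factors), whose large-$n$ behavior is governed by the saddle point of $\frac{1}{2}\zeta^2+n\log(\zeta\mp\lambda/\sqrt2)$, or equivalently by deforming to the steepest-descent contour. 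I expect the saddle to sit near $\zeta\approx\pm i\sqrt{n/2}$, which is what produces the oscillatory factor $e^{\pm i\lambda\sqrt n}$ and the algebraic prefactor $n^{-1/4}$. The Stirling expansion of $(n-1)!$ combined with the $2^{n+1}$ normalization is what turns the raw saddle-point output into the clean form $(8\pi n)^{-1/4}$, and the phase $(\mp i)^{n-1}$ emerges from the $i^{n}$-type factor at the saddle. This is essentially a Plancherel–Rotach asymptotic analysis adapted to the error function rather than to Hermite polynomials. The hard part will be controlling the error term uniformly in $\lambda$ over bounded sets: one must show the remainder is genuinely $O(n^{-1/2})$ relative to the leading term, which requires estimating the contribution away from the saddle and verifying that the $\lambda$-dependence of the saddle location does not spoil uniformity. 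The uniform control of this error, rather than the leading-order computation, is where the real work lies.

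Finally, for the relation \eqref{eq relation between P and In^pm}, I would argue that $I_n^+(\lambda)+I_n^-(\lambda)$ is a polynomial solution of \eqref{eq spectral equation J0} with the correct initial data, and then invoke uniqueness. Since ${P_0}_n$ is the unique solution normalized by ${P_0}_1\equiv1$ and ${P_0}_2=\lambda/a_1=\lambda$ (here $a_1=1$ for $\mathcal J_0$), it suffices to check that $I_1^+ +I_1^-=1$ and $I_2^+ +I_2^-=\lambda$. Using $w(0)$ and $w'(0)$ (which follow from \eqref{eq error function}: $w(0)=1$ and $w'(0)=2i/\sqrt\pi$ up to sign) together with the definitions \eqref{eq In^+} and \eqref{eq In^-}, the $z$-odd contributions of $w^{(n-1)}$ at $\pm\lambda/\sqrt2$ must cancel or reinforce appropriately so that the sum is a polynomial matching the initial conditions. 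The symmetry $I_n^-(\lambda)=I_n^+(-\lambda)\cdot(-1)^{n-1}$-type relation between the two solutions makes the sum manifestly entire in $\lambda$, and the degree count (degree $n-1$) plus the two initial values pins it down uniquely as ${P_0}_n$. This step is short once the groundwork is laid. I would therefore budget most of the effort for the saddle-point asymptotics and its uniformity.
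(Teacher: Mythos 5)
Your proposal is correct, and on the technical heart it coincides with the paper: the asymptotics \eqref{eq asymptotics of In^pm} are obtained there by precisely the saddle-point analysis you outline, packaged in the Appendix as a Plancherel--Rotach type theorem for $w^{(n-1)}(\mu\sqrt{2n})$, uniform in $|\mu|<\mu_0$, with the saddle of $-\frac{(z-2\mu)^2}2-\ln z$ located at $\varphi(\mu)=\mu+\sqrt{\mu^2-1}\to i$; the uniform control of the error away from the saddle is indeed where the paper spends its effort, exactly as you predicted. Where you genuinely diverge is in the two algebraic steps, and in both cases your route is more elementary. For the solution property, the paper first transforms \eqref{eq spectral equation J0} into the Hermite recurrence $2nv_{n-1}+v_{n+1}=2xv_n$ (with $v_n=\sqrt{2^nn!}\,u_{n+1}$, $x=\lambda/\sqrt2$) and verifies that $w^{(n)}(-x)$ and $(-1)^nw^{(n)}(x)$ solve it by integration by parts in the integral representation of $w^{(n)}$; your plan --- Leibniz-differentiating $w'(z)=-2zw(z)+\frac{2i}{\sqrt\pi}$ to obtain $w^{(n)}+2zw^{(n-1)}+2(n-1)w^{(n-2)}=0$ --- yields the same recurrence with less machinery (and your hedge on the sign convention resolves in favor of the ODE just written, which is the one the paper itself lists). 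For \eqref{eq relation between P and In^pm}, the paper splits the classical contour-integral representation of $H_n$ into two pieces, obtaining $H_n(x)=\frac{e^{x^2}}2\left(w^{(n)}(-x)+(-1)^nw^{(n)}(x)\right)$ together with the explicit identification ${P_0}_n(\lambda)=H_{n-1}(\lambda/\sqrt2)/\sqrt{2^{n-1}(n-1)!}$, whereas you check the initial values $I_1^++I_1^-=1$ and $I_2^++I_2^-=\lambda$ (both follow from $w(z)+w(-z)=2e^{-z^2}$ and the ODE) and invoke uniqueness of the solution of a second-order recurrence with prescribed first two entries; this avoids Hermite integral representations entirely, and your remarks about degree counting and entirety are superfluous --- uniqueness from the two initial values alone pins the sum down. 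The one item your sketch leaves implicit is linear independence, needed to call $\{I^+,I^-\}$ a basis: it follows either from the asymptotics, since $I_n^+/I_n^-$ cannot tend to a constant because of the alternating factor $(-1)^{n-1}$, or from the Wronskian $W(I^+,I^-)=ie^{\frac{\lambda^2}2}/\sqrt{2\pi}$, which the paper computes in the lemma immediately following the theorem.
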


\begin{proof}
The spectral equation \eqref{eq spectral equation J0} for
$\mathcal J_0$,
\begin{equation*}
    \sqrt{n-1}u_{n-1}+\sqrt{n}u_{n+1}=\lambda u_n,\ n \geq 2,
\end{equation*}
can be transformed to the recurrence relation
\begin{equation}\label{eq recurrent relation for Hermite polynomials}
    2nv_{n-1}(x)+v_{n+1}(x)=2xv_n(x),\ n\ge1
\end{equation}
if one takes $v_n:=\sqrt{2^nn!}u_{n+1}$ and
$x:=\frac{\lambda}{\sqrt 2}$. Equation \eqref{eq recurrent
relation for Hermite polynomials} is satisfied by Hermite
polynomials \cite{Abramowitz-Stegun}, and this means (together
with initial values: $H_0(x)\equiv1$ and $H_1(x)=2x$), that the
polynomials of the first kind for $J_0$ equal
\begin{equation}\label{eq connection}
    {P_0}_n(\lambda)=\frac{H_{n-1}(\frac{\lambda}{\sqrt2})}{\sqrt{2^{n-1}(n-1)!}}.
\end{equation}
Equation \eqref{eq recurrent relation for Hermite polynomials} has
two other linearly independent solutions, $w^{(n)}(-x)$ and
$(-1)^nw^{(n)}(x)$ \cite{Abramowitz-Stegun}. This can be checked
by substituting them into \eqref{eq recurrent relation for Hermite
polynomials} using the formula
\begin{equation*}
    w^{(n)}(z)=\frac{n!}{\pi i}\int_{\Gamma_z^-}\frac{e^{-\zeta^2}d\zeta}{(\zeta-z)^{n+1}}
\end{equation*}
and integrating by parts. From the integral representation for
Hermite polynomials \cite{Abramowitz-Stegun},
\begin{multline*}
    H_n(x)=\frac{n!}{2\pi i} \oint_0\frac{e^{2xz-z^2}}{z^{n+1}}dz
    \\
    =\frac{n!e^{x^2}}{2\pi i}\l(\int_{\Gamma_{-x}^-}\frac{e^{-z^2}dz}{(z+x)^{n+1}}-\int_{\Gamma_{-x}^+}\frac{e^{-z^2}dz}{(z+x)^{n+1}}\r)
    \\
    =\frac{e^{x^2}}2w^{(n)}(-x)+\frac{e^{x^2}}2(-1)^nw^{(n)}(x),
\end{multline*}
where the contour $\Gamma_z^+$ is shown on Figure \ref{fig gamma
pm z}. Correspondingly, equation \eqref{eq spectral equation J0}
has two linearly independent solutions of the form \eqref{eq In^+}
and \eqref{eq In^-} and relation \eqref{eq relation between P and
In^pm} holds. Asymptotics of these solutions follow immediately
from Corollary \ref{cor asymptotics of w^(n-1)(z)} from the
appendix.
\end{proof}

In what follows we will need to know the Wronskian of the
solutions.

\begin{lem}
    \begin{equation*}
        W(I^{+}(\lambda),I^{-}(\lambda))=i\frac{e^{\frac{\lambda^2}2}}{\sqrt{2\pi}}.
    \end{equation*}
\end{lem}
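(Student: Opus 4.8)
The plan is to compute the Wronskian $W(I^+(\lambda),I^-(\lambda))=a_n(I_n^+ I_{n+1}^- - I_{n+1}^+ I_n^-)$, which by the earlier remark is independent of $n$. Since we may evaluate at any convenient index, the most direct route is to use the asymptotic formula \eqref{eq asymptotics of In^pm} and pass to the limit $\ninf$.

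Let me work out that limit explicitly. We need the free Hermite recurrence weights $a_n=\sqrt n$, so I would evaluate
\begin{equation*}
    W(I^+,I^-)=\lim_{\ninf}\sqrt n\l(I_n^+(\lambda)I_{n+1}^-(\lambda)-I_{n+1}^+(\lambda)I_n^-(\lambda)\r).
\end{equation*}
Substituting the asymptotics $I_n^{\pm}\sim\frac{(\mp i)^{n-1}e^{\lambda^2/4\pm i\lambda\sqrt n}}{(8\pi n)^{1/4}}$, the leading factor in each product is $\frac{1}{(8\pi)^{1/2}(n(n+1))^{1/4}}\sim\frac{1}{(8\pi)^{1/2}\sqrt n}$, so the prefactor $\sqrt n$ cancels the $1/\sqrt n$ decay and a genuine finite limit survives. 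The power-of-$i$ bookkeeping gives a net factor $(-i)^{n-1}(i)^n-(-i)^n(i)^{n-1}$, and the exponentials give $e^{\lambda^2/2}\l(e^{i\lambda(\sqrt n+\sqrt{n+1})}-e^{i\lambda(\sqrt{n+1}+\sqrt n)}\r)$-type terms that must be handled carefully. In fact the phases $\sqrt n$ and $\sqrt{n+1}$ differ by $\sqrt{n+1}-\sqrt n=O(1/\sqrt n)\to0$, which is exactly what makes the difference of two nearly-equal oscillating exponentials produce a nonzero limit rather than canceling. I would expand $e^{\pm i\lambda\sqrt{n+1}}=e^{\pm i\lambda\sqrt n}(1\pm i\lambda(\sqrt{n+1}-\sqrt n)+O(1/n))$ and use $\sqrt n(\sqrt{n+1}-\sqrt n)\to\frac12$ to extract the constant.

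The anticipated main obstacle is the careful separation of the surviving term from the error, because both products $I_n^+I_{n+1}^-$ and $I_{n+1}^+I_n^-$ are of exact order $1/\sqrt n$ and their leading parts must be shown to cancel, leaving a term of the same order that combines with the $O(1/\sqrt n)$ relative errors in \eqref{eq asymptotics of In^pm}. One must verify that after multiplying by $\sqrt n$ the error contributions are $O(1/\sqrt n)$ and vanish, while the main contribution consolidates into the claimed constant $i\frac{e^{\lambda^2/2}}{\sqrt{2\pi}}$. I would track the phase and amplitude expansions simultaneously, collecting the $e^{\lambda^2/2}/\sqrt{2\pi}$ amplitude from $e^{\lambda^2/4}\cdot e^{\lambda^2/4}=e^{\lambda^2/2}$ and $(8\pi)^{-1/2}$, and the factor $i$ from the combination of the $i$-powers with the $\frac12$ coming from $\sqrt n(\sqrt{n+1}-\sqrt n)$.

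As a consistency check and possibly a cleaner alternative, I would note that the Wronskian can also be read off from the known normalization $W({P_0}(\lambda),{Q_0}(\lambda))\equiv1$ together with the decomposition ${P_0}_n=I_n^++I_n^-$ from \eqref{eq relation between P and In^pm}; since $W$ is bilinear and antisymmetric, $W(I^+,I^-)$ determines the spectral density $\rho_0'(\lambda)=e^{-\lambda^2/2}/\sqrt{2\pi}$ via the standard relation, so the stated value $i\,e^{\lambda^2/2}/\sqrt{2\pi}$ is exactly what is needed for consistency with $\rho_0'$. This sanity check does not replace the direct limit computation but confirms the constant and its sign.
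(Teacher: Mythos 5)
Your overall route is legitimate and genuinely different from the paper's: the paper never takes a limit, it simply evaluates the $n$-independent Wronskian at $n=1$, writing $W(I^+,I^-)=I_1^+I_2^--I_2^+I_1^-$ explicitly via \eqref{eq In^+}, \eqref{eq In^-} and closing the computation with the two error-function identities $w'(z)=-2zw(z)+\frac{2i}{\sqrt\pi}$ and $w(z)+w(-z)=2e^{-z^2}$. Evaluating instead at $n=\infty$ through the asymptotics \eqref{eq asymptotics of In^pm} can be made to work, and it has the merit of not requiring any special-function identities. However, the mechanism you describe for the key step is wrong, and if followed literally it would sink the proof.

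First, the phases combine as differences, not sums: $I_n^+I_{n+1}^-$ carries $e^{i\lambda(\sqrt n-\sqrt{n+1})}$ and $I_{n+1}^+I_n^-$ carries $e^{i\lambda(\sqrt{n+1}-\sqrt n)}$. Second, and more importantly, the leading parts do \emph{not} cancel — they add. The $i$-power bookkeeping gives $(-i)^{n-1}i^{n}=i$ and $(-i)^{n}i^{n-1}=-i$, so with $\epsilon_n:=\sqrt{n+1}-\sqrt n$,
\begin{equation*}
    I_n^+I_{n+1}^--I_{n+1}^+I_n^-
    =\frac{e^{\frac{\lambda^2}2}}{\sqrt{8\pi}\,(n(n+1))^{1/4}}
    \Bigl(ie^{-i\lambda\epsilon_n}+ie^{i\lambda\epsilon_n}\Bigr)\Bigl(1+O\Bigl(\tfrac1{\sqrt n}\Bigr)\Bigr)
    =\frac{2i\cos(\lambda\epsilon_n)\,e^{\frac{\lambda^2}2}}{\sqrt{8\pi}\,(n(n+1))^{1/4}}\Bigl(1+O\Bigl(\tfrac1{\sqrt n}\Bigr)\Bigr),
\end{equation*}
and multiplying by $\sqrt n$ and letting $\ninf$ (where $\cos(\lambda\epsilon_n)\to1$) gives $\frac{2ie^{\lambda^2/2}}{\sqrt{8\pi}}=i\frac{e^{\lambda^2/2}}{\sqrt{2\pi}}$ directly. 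The expansion $\sqrt n(\sqrt{n+1}-\sqrt n)\to\frac12$ that you plan to use is never needed; the phase increment only enters terms that vanish in the limit. This is not a cosmetic point: if the leading parts really did cancel as you claim, the surviving term would be of the same order as the \emph{unspecified} $O(1/\sqrt n)$ relative errors in \eqref{eq asymptotics of In^pm}, and no limit could be extracted from those asymptotics at all — your own bookkeeping (a factor $i$ from $2i\times\frac12$, times $(8\pi)^{-1/2}$) would land at $\frac{i}{2\sqrt{2\pi}}$, off by a factor of $2$, which is a symptom of the wrong mechanism. Once the constructive addition is recognized, your limit argument is a correct and self-contained alternative to the paper's finite computation; the closing "consistency check" via $W(P_0,Q_0)\equiv1$ is too vague to carry weight, but it is only offered as a sanity check.
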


\begin{proof}
One has from \eqref{eq In^+} and \eqref{eq In^-}:
\begin{multline*}
    W(I^{+}(\lambda),I^{-}(\lambda))=I_1^+(\lambda)I_2^-(\lambda)-I_2^+(\lambda)I_1^-(\lambda)
    \\
    =\frac{e^{\lambda^2}}{4\sqrt 2}
    \l(w\l(\frac{\lambda}{\sqrt 2}\r)w'\l(-\frac{\lambda}{\sqrt 2}\r)+w\l(-\frac{\lambda}{\sqrt 2}\r)w'\l(\frac{\lambda}{\sqrt 2}\r)\r)
    =i\frac{e^{\frac{\lambda^2}2}}{\sqrt{2\pi}},
\end{multline*}
using the following properties of the error function \cite{Abramowitz-Stegun}:
\begin{equation*}
    \begin{array}{l}
      w'(z)=-2zw(z)+\frac{2i}{\sqrt{\pi}}, \\
      w(z)+w(-z)=2e^{-z^2}. \\
    \end{array}
\end{equation*}
\end{proof}

\section{The perturbed Hermite operator}\label{section perturbed}
In this section, we consider the Hermite operator $\mathcal J$
with "small" perturbation, i.e., satisfying conditions \eqref{eq
conditions}, and prove Theorem \ref{thm result 2}. We study
asymptotics of polynomials of the first and second kind using the
Volterra-type equation and derive from these asymptotics a formula
for the Weyl function. The desired Weyl-Titchmarsh type formula
follows from this. We start with proving a formula of variation of
parameters. Remind that $P_n(\lambda)$ are polynomials of the
first kind for $\mathcal J$, ${P_0}_n(\lambda)$ are polynomials of
the first kind for $\mathcal J_0$, $\Lambda$ is the expression
given by \eqref{eq definition of Lambda}, $a_n=\sqrt n+c_n$. Let
us denote
\begin{equation*}
    W(\lambda):=W(I^+(\lambda),I^-(\lambda)).
\end{equation*}

\begin{lem}\label{lem variation of parameters formula}
    For $n\ge2$,
    \begin{equation}\label{eq formula of variation of parameters}
        \frac{a_{n-1}}{\sqrt{n-1}}P_n(\lambda)
        ={P_0}_n(\lambda)-\sum_{k=1}^{n-1}\frac{(\Lambda I^+(\lambda))_kI^-_n(\lambda)-I^+_n(\lambda)(\Lambda I^-(\lambda))_k}{W(\lambda)}P_k(\lambda).
    \end{equation}
\end{lem}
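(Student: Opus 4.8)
The plan is to derive \eqref{eq formula of variation of parameters} in two stages: first a standard discrete variation-of-parameters (Green's function) representation in which the inhomogeneity carries $(\Lambda P)_k$, and then a summation by parts that transfers $\Lambda$ off $P$ and onto $I^{\pm}$, using the symmetry of the tridiagonal expression $\Lambda$. First I would rewrite the eigenvalue equation for $\mathcal J$ as an inhomogeneous version of \eqref{eq spectral equation J0}. Since $\mathcal J=\mathcal J_0+\Lambda$ and $a_n=\sqrt n+c_n$, for $n\ge2$ we have $\sqrt{n-1}\,P_{n-1}+\sqrt n\,P_{n+1}-\lambda P_n=-(\Lambda P)_n$. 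Treating the right-hand side as a source, I would verify by substitution into the free recurrence and telescoping that $u_n^{\mathrm{part}}:=-\sum_{k=1}^{n-1}\frac{I_k^+I_n^--I_n^+I_k^-}{W(\lambda)}(\Lambda P)_k$ solves this inhomogeneous equation, the interior terms cancelling because $I^{\pm}$ solve \eqref{eq spectral equation J0} and the endpoint $k=n$ producing exactly the source via $a_n(I_n^+I_{n+1}^--I_{n+1}^+I_n^-)=W(\lambda)$. Matching the initial data through $u_1^{\mathrm{part}}=0$ and the homogeneous part ${P_0}_n=I_n^++I_n^-$ from \eqref{eq relation between P and In^pm} then gives the Volterra-type identity $P_n={P_0}_n-\sum_{k=1}^{n-1}\frac{I_k^+I_n^--I_n^+I_k^-}{W(\lambda)}(\Lambda P)_k$ (one checks consistency at $n=2$, where it reproduces $P_2=(\lambda-b_1)/a_1$).

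The second stage rests on a discrete Lagrange identity for $\Lambda$. Because $\Lambda$ from \eqref{eq definition of Lambda} is a symmetric three-term expression with subdiagonal $c_{k-1}$, diagonal $b_k$, superdiagonal $c_k$, and the convention $c_0=0$, the quantity $(\Lambda u)_kv_k-u_k(\Lambda v)_k$ equals $g_k-g_{k-1}$ with $g_k:=c_k(u_{k+1}v_k-u_kv_{k+1})$. Summing from $1$ to $n-1$ telescopes (the lower endpoint vanishing since $c_0=0$) to give $\sum_{k=1}^{n-1}\big[(\Lambda P)_kI_k^{\pm}-P_k(\Lambda I^{\pm})_k\big]=c_{n-1}\big(P_nI_{n-1}^{\pm}-P_{n-1}I_n^{\pm}\big)$.

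I would then insert these two identities (for $I^+$ and $I^-$) into the sum of the first stage, replacing each $(\Lambda P)_kI_k^{\pm}$ by $P_k(\Lambda I^{\pm})_k$ plus a boundary contribution. The boundary terms combine to $\frac{c_{n-1}}{W(\lambda)}P_n\big(I_{n-1}^+I_n^--I_n^+I_{n-1}^-\big)$, and evaluating the Wronskian of $I^{\pm}$ at index $n-1$, namely $W(\lambda)=\sqrt{n-1}\,(I_{n-1}^+I_n^--I_n^+I_{n-1}^-)$, collapses this to $\frac{c_{n-1}}{\sqrt{n-1}}P_n$. Substituting back, the sum now reads $\sum_{k=1}^{n-1}\frac{(\Lambda I^+)_kI_n^--I_n^+(\Lambda I^-)_k}{W(\lambda)}P_k$, and transferring the extra $\frac{c_{n-1}}{\sqrt{n-1}}P_n$ to the left turns the coefficient of $P_n$ into $1+\frac{c_{n-1}}{\sqrt{n-1}}=\frac{\sqrt{n-1}+c_{n-1}}{\sqrt{n-1}}=\frac{a_{n-1}}{\sqrt{n-1}}$, which is precisely \eqref{eq formula of variation of parameters}.

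The computation is finite and purely algebraic, so there are no convergence issues and the implicit appearance of $P$ on both sides is harmless. The only real care required is bookkeeping: keeping every sign and the $c_0=0$ endpoint convention straight in the telescoping identity, and correctly isolating the Wronskian factor $\sqrt{n-1}$ that is responsible for the coefficient $a_{n-1}/\sqrt{n-1}$. This last point is the main subtlety, since it is exactly the boundary term generated by the summation by parts, rather than anything visible in the naive variation-of-parameters formula, that upgrades $P_n$ to $\frac{a_{n-1}}{\sqrt{n-1}}P_n$.
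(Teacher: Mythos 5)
Your proof is correct and follows essentially the same route as the paper's: a discrete variation-of-parameters identity with $(\Lambda P)_k$ as the source, followed by summation by parts (your Lagrange identity with $g_k=c_k(u_{k+1}v_k-u_kv_{k+1})$ is exactly the paper's ``shifting indices'' step), whose top boundary term, evaluated through the Wronskian at index $n-1$, produces the factor $a_{n-1}/\sqrt{n-1}$. The only differences are cosmetic --- you start the sum at $k=1$ with ${P_0}_n$ as the homogeneous part, whereas the paper starts at $k=2$ with the free solution $u$ matching $P_1,P_2$ and recovers ${P_0}_n$ afterwards --- plus one harmless notational slip: the endpoint identity should read $\sqrt{n}\left(I_n^+I_{n+1}^--I_{n+1}^+I_n^-\right)=W(\lambda)$ with the free weight $\sqrt{n}$, not the perturbed weight $a_n$.
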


\begin{proof}
Let us omit the dependence on $\lambda$ everywhere. First let us prove that
\begin{equation}\label{eq formula of variation, first step}
    P_n=u_n-\sum_{k=2}^{n-1}\frac{I_k^{+}I_n^{-}-I_k^{-}I_n^{+}}W(\Lambda P)_k,\ n\ge3,
\end{equation}
where $u$ is the solution of \eqref{eq spectral equation J0} such
that $u_1=P_1$ and $u_2=P_2$. Let us denote
\begin{equation*}
    \widetilde P_n:=
    \l\{
    \begin{array}{l}
      u_n-\sum\limits_{k=2}^{n-1}\frac{I_k^{+}I_n^{-}-I_k^{-}I_n^{+}}W(\Lambda P)_k,\ n\ge3, \\
      P_n,\ n=1,2 \\
    \end{array}
    \r.
\end{equation*}
I fact, one has to check that
\begin{equation*}
    \sqrt{n-1}\widetilde P_{n-1}-\lambda\widetilde P_n+\sqrt n\widetilde P_{n+1}=-(\Lambda P)_n,\ n\ge2,
\end{equation*}
(this non-homogeneous equation has only one solution with fixed
two first values, so $\widetilde P$ should coincide with $P$).
Since $u$, $I^+$ and $I^-$ are solutions to \eqref{eq spectral
equation J0} and
\begin{equation*}
    \sqrt n \sum_{k=n}^n \frac{I^+_kI^-_{n+1}-I^+_{n+1}I^-_k}W(\Lambda P)_k=(\Lambda P)_n,
\end{equation*}
the previous is equivalent to
\begin{equation*}
    -\lambda\sum_{k=n-1}^{n-1}\frac{I^+_kI^-_n-I^+_nI^-_k}W(\Lambda P)_k+\sqrt n\sum_{k=n-1}^{n-1}\frac{I^+_kI^-_{n+1}-I^+_{n+1}I^-_k}W(\Lambda P)_k=0
\end{equation*}
for $n\ge3$. The latter is true, because $-\lambda I^{\pm}_n+\sqrt
nI^{\pm}_{n+1}=-\sqrt{n-1}I^{\pm}_{n-1}$.

After shifting indices in different parts of the sum in \eqref{eq
formula of variation, first step} one obtains:
\begin{multline*}
    P_n=u_n+\frac{I^+_1I^-_n-I^+_nI^-_1}W(b_1P_1+c_1P_2)
    \\
    -\sum_{k=1}^{n-1}\frac{(\Lambda I^+)_kI^-_n-I^+_n(\Lambda I^-)_k}WP_k-\frac{c_{n-1}}{\sqrt{n-1}}P_n.
\end{multline*}
Since
\begin{equation*}
    u_n=\frac{I^+_1P_2-I^+_2P_1}WI^-_n-\frac{I^-_1P_2-I^-_2P_1}WI^+_n,\ P_1=1,\ P_2=\frac{\lambda-b_1}{a_1},
\end{equation*}
one has:
\begin{multline*}
    u_n+\frac{I^+_1I^-_n-I^+_nI^-_1}W(b_1P_1+c_1P_2)=\frac{\lambda I^+_1-I^+_2}WI^-_n-\frac{\lambda I^-_1-I^-_2}WI^+_n
    \\
    =\frac{I^+_1{P_0}_2-I^+_2{P_0}_1}WI^-_n-\frac{I^+_2{P_0}_1-I^+_1{P_0}_2}WI^+_n={P_0}_n.
\end{multline*}
Therefore
\begin{equation*}
\frac{a_{n-1}}{\sqrt{n-1}}P_n={P_0}_n-\sum_{k=1}^{n-1}\frac{(\Lambda I^+)_kI^-_n-I^+_n(\Lambda I^-)_k}{W}P_k.
\end{equation*}
\end{proof}

Equation \eqref{eq formula of variation of parameters} is of Volterra type. We need the following standard lemma to deal with it. Consider the Banach space
\begin{equation*}
    \mathcal{B}:=\l\{\{u_n\}_{n=1}^{\infty}:\sup\limits_n\l(\frac{|u_n|n^{1/4}}{e^{|\Im\,\lambda|\sqrt n}}\r)<\infty\r\}
\end{equation*}
with the norm
\begin{equation*}
    \|u\|_{\mathcal{B}}:=\sup\limits_n\l(\frac{|u_n|n^{1/4}}{e^{|\Im\,\lambda|\sqrt n}}\r)
\end{equation*}
(we omit the dependence on $\lambda$ in the notation for $\mathcal
B$). Let $\mathcal V$ be the expression
\begin{equation}\label{def operator V}
    (\mathcal Vu)_n:=\l\{
    \begin{array}{l}
        0,\ n=1 \\
        \sum_{k=1}^{n-1}V_{nk}u_k,\ n\ge2 \\
    \end{array}
    \r.
\end{equation}
for any sequence $\{u\}_{n=1}^{\infty}$. Let
\begin{equation}\label{eq nu}
    \nu:=\sup\limits_{n>1}\sum\limits_{k=1}^{n-1}|V_{nk}|e^{|\Im\, \lambda|(\sqrt k-\sqrt n)}\l(\frac nk\r)^{1/4}.
\end{equation}

\begin{lem}\label{lem bounds for abstract Volterra operator}
    If $\nu<\infty$, then $\mathcal V$ is a bounded operator in $\mathcal B$,
    $(I-\mathcal V)^{-1}$ exists and $\|\mathcal V\|_{\mathcal{B}}\le\nu$, $\|(I-\mathcal V)^{-1}\|_{\mathcal{B}}\le e^{\nu}$.
\end{lem}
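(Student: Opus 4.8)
The plan is to handle $I-\mathcal V$ through its Neumann series, exploiting that $\mathcal V$ is strictly lower-triangular (Volterra): the sum in \eqref{def operator V} runs only over $k<n$. First I would verify the norm bound $\|\mathcal V\|_{\mathcal B}\le\nu$ directly from the definitions. For $u\in\mathcal B$ and $n\ge2$ the triangle inequality gives $|(\mathcal V u)_n|\le\sum_{k=1}^{n-1}|V_{nk}|\,|u_k|$, and since $|u_k|\le\|u\|_{\mathcal B}\,e^{|\Im\,\lambda|\sqrt k}k^{-1/4}$ by the definition of the norm, multiplying through by $n^{1/4}e^{-|\Im\,\lambda|\sqrt n}$ and comparing with \eqref{eq nu} yields $\dfrac{|(\mathcal V u)_n|\,n^{1/4}}{e^{|\Im\,\lambda|\sqrt n}}\le\nu\,\|u\|_{\mathcal B}$; taking the supremum over $n$ gives $\|\mathcal V u\|_{\mathcal B}\le\nu\|u\|_{\mathcal B}$, so $\mathcal V$ is bounded whenever $\nu<\infty$.

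The heart of the argument is the iterated estimate $\|\mathcal V^j\|_{\mathcal B}\le\nu^j/j!$. I would compute the kernel of $\mathcal V^j$ explicitly as $V^{(j)}_{nk}=\sum_{n>k_1>\dots>k_{j-1}>k}V_{nk_1}V_{k_1k_2}\cdots V_{k_{j-1}k}$, the sum extending over the strictly decreasing chains of indices forced by the Volterra structure. Running the weighted row-sum estimate of the first step on this kernel reduces the claim to bounding $\sum_{n>k_1>\dots>k_j}\widetilde V_{nk_1}\cdots\widetilde V_{k_{j-1}k_j}$ by $\nu^j/j!$, where the normalized entries $\widetilde V_{mk}:=|V_{mk}|\,e^{|\Im\,\lambda|(\sqrt k-\sqrt m)}(m/k)^{1/4}$ have row sums $\sum_{k<m}\widetilde V_{mk}\le\nu$ by \eqref{eq nu}.

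The crucial and most delicate point, which I expect to be the main obstacle, is extracting the factorial $1/j!$. Naively iterating the first-step bound produces only $\nu^j$, which is useless when $\nu\ge1$, so the gain must come entirely from the strict ordering $k_1>\dots>k_j$ of the intermediate summation indices. The mechanism is the discrete analogue of the simplex-volume identity $\mathrm{vol}\{0<t_j<\dots<t_1<x\}=x^j/j!$: one symmetrizes the ordered sum over the $j!$ permutations of the summation variables and dominates it by $\tfrac1{j!}$ times the unrestricted product of the row sums, each controlled by $\nu$, using the uniform bound $\sum_{k<m}\widetilde V_{mk}\le\nu$ together with the monotone nesting of the index ranges. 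This gives $\|\mathcal V^j\|_{\mathcal B}\le\nu^j/j!$. Finally I would sum the Neumann series: since $\sum_{j=0}^{\infty}\nu^j/j!=e^{\nu}<\infty$, the series $\sum_{j=0}^{\infty}\mathcal V^j$ converges absolutely in operator norm, its sum is a bounded inverse of $I-\mathcal V$ on $\mathcal B$, and $\|(I-\mathcal V)^{-1}\|_{\mathcal B}\le e^{\nu}$, which completes the proof.
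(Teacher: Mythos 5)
Your proposal follows exactly the same route as the paper's own proof: the direct estimate $\|\mathcal V\|_{\mathcal B}\le\nu$, the iterated-kernel bound $\|\mathcal V^j\|_{\mathcal B}\le\nu^j/j!$, and summation of the Neumann series. The first and third steps are fine. The gap is in the step you yourself single out as crucial, and it is a genuine one: the factorial cannot be extracted from the row-sum hypothesis \eqref{eq nu}, and the symmetrization you describe does not apply. Symmetrizing over the $j!$ orderings of $(k_1,\dots,k_j)$ helps only when the summand is dominated by a \emph{symmetric} function of these indices; the chain product $\widetilde V_{nk_1}\widetilde V_{k_1k_2}\cdots\widetilde V_{k_{j-1}k_j}$, with $\widetilde V_{mk}:=|V_{mk}|e^{|\Im\,\lambda|(\sqrt k-\sqrt m)}(m/k)^{1/4}$, is not symmetric --- permuting the indices changes which pairs occur, and the row-sum bound says nothing about the permuted products. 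Worse, the claimed bound, and indeed the lemma as stated, is false. Choose positive entries $V_{n,n-1}$ with $\widetilde V_{n,n-1}=\nu$ and set all other entries to zero; then \eqref{eq nu} equals $\nu$, but under the isometry $u_n\mapsto n^{1/4}e^{-|\Im\,\lambda|\sqrt n}u_n$ of $\mathcal B$ onto $\ell^\infty$ the operator $\mathcal V$ becomes the weighted shift $(\mathcal Vu)_n=\nu u_{n-1}$ ($n\ge2$), $(\mathcal Vu)_1=0$. For it $\|\mathcal V^j\|=\nu^j$ exactly, with no factorial gain, and $(I-\mathcal V)u=f$ is solved uniquely by forward substitution, $u_n=\sum_{m=1}^n\nu^{n-m}f_m$. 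For $\nu\ge1$ and $f_m\equiv1$ this $u$ is unbounded, so $(I-\mathcal V)^{-1}$ does not exist as an operator on $\mathcal B$; for $\nu<1$ one gets $\|(I-\mathcal V)^{-1}\|_{\mathcal B}=\frac1{1-\nu}>e^{\nu}$. So no argument can close the gap under the stated hypothesis.

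You should know that the paper's own proof commits the same error: it asserts the $1/l!$ bound for the iterated kernels without justification, and the shift example above refutes that inequality as well. The lemma is harmless in context only because in its sole application (Lemma \ref{lem estimates on growth of polunomials}) the kernel satisfies a stronger, column-type bound: the estimate \eqref{eq estimate for Lamba I^pm} yields $\widetilde V_{nk}\le\tilde c_k$ uniformly in $n$, with $\sum_k\tilde c_k<\infty$ by \eqref{eq conditions}. Under that hypothesis your symmetrization idea is correct: the chain product is dominated by the symmetric product $\tilde c_{k_1}\cdots\tilde c_{k_j}$, and summing over the simplex $k_j<\dots<k_1<n$ gives at most $\bigl(\sum_k\tilde c_k\bigr)^j/j!$, whence $\|(I-\mathcal V)^{-1}\|_{\mathcal B}\le\exp\bigl(\sum_k\tilde c_k\bigr)$. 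The correct repair --- for your proof and for the paper's --- is therefore to replace the row-sum quantity \eqref{eq nu} by the column quantity $\nu:=\sum_k\sup_{n>k}\widetilde V_{nk}$ (equivalently, to assume $\widetilde V_{nk}\le\tilde c_k$ with $\nu:=\sum_k\tilde c_k<\infty$); with that hypothesis every step you wrote goes through verbatim.
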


\begin{proof}
By definition of the operator norm we have to check the finiteness
of the following:
\begin{multline*}
    \sup\li_{u\neq0}\frac{\|\mathcal Vu\|_{\B}}{\|u\|_{\B}}
    =\sup\li_{u\neq0}
    \frac{\sup\li_{n>1}\frac{\l|\sum_{k=1}^{n-1}V_{nk}u_k\r|n^{1/4}}{e^{|\Im\, \lambda|\sqrt n}}}
         {\sup\li_{n}\frac{|u_n|n^{1/4}}{e^{|\Im\, \lambda|\sqrt n}}}
    \\
    \le\sup\li_{u\neq0}
    \frac{\sup\li_{n>1}\sum\li_{k=1}^{n-1}|V_{nk}|\frac{|u_k|k^{1/4}}{e^{|\Im\, \lambda|\sqrt k}}\l(\frac nk\r)^{1/4}e^{|\Im\, \lambda|(\sqrt k-\sqrt n)}}
         {\sup\li_{n}\frac{|u_n|n^{1/4}}{e^{|\Im\, \lambda|\sqrt n}}}.
\end{multline*}
Denoting $\widetilde{u}_n:=u_n\frac{n^{1/4}}{e^{\Im\lambda\sqrt
n}}$, we have:
\begin{multline*}
    \sup\li_{u\neq0}\frac{\|\mathcal Vu\|_{\B}}{\|u\|_{\B}}
    \le\sup\li_{\widetilde{u}\neq0}
    \frac{\sup\li_{n>1}\sum\li_{k=1}^{n-1}|V_{nk}||\widetilde{u}_k|\l(\frac nk\r)^{1/4}e^{|\Im\, \lambda|(\sqrt k-\sqrt n)}}
         {\sup\li_{n}|\widetilde{u}_n|}
    \\
    \le\sup\li_{n>1}\sum_{k=1}^{n-1}|V_{nk}|\l(\frac nk\r)^{1/4}e^{|\Im\, \lambda|(\sqrt k-\sqrt
    n)},
\end{multline*}
hence $\mathcal V$ is bounded. Quite similarly,
\begin{multline*}
    \|\mathcal V^l\|_{\B}
    \le\sup\li_{n>1}\sum_{k=1}^{n-1}\l|\sum_{1\le k_1<k_2<...<k_{l-1}<k}V_{nk_1}V_{k_1k_2}...V_{k_{l-1}k}\r|\l(\frac nk\r)^{1/4}e^{|\Im\, \lambda|(\sqrt k-\sqrt n)}
    \\
    \le\sup\li_{n>1}\frac{\l(\sum_{k=1}^{n-1}|V_{nk}|\l(\frac nk\r)^{1/4}e^{|\Im\, \lambda|(\sqrt k-\sqrt n)}\r)^l}{l!}.
\end{multline*}
Therefore
\begin{equation*}
    1+\|\mathcal V\|_{\B}+\|\mathcal V^2\|_{\B}+...
    \le\exp\l\{\sup\limits_{n>1}\sum\limits_{k=1}^{n-1}|V_{nk}|e^{|\Im\, \lambda|(\sqrt k-\sqrt n)}\l(\frac
    nk\r)^{1/4}\r\},
\end{equation*}
and hence the operator $(I-\mathcal V)^{-1}$ exists, is bounded,
and its norm is estimated by the same expression.
\end{proof}

Now we can prove the uniform estimate on the growth of polynomials.

\begin{lem}\label{lem estimates on growth of polunomials}
    Let the condition \eqref{eq conditions} hold for $\mathcal J$. Then
    \begin{equation}\label{eq estimate for P}
        P_n(\lambda)=O\l(\frac{e^{|\Im\, \lambda|\sqrt n}}{n^{1/4}}\r)\as\ninf
    \end{equation}
    uniformly with respect to $\lambda$ on every bounded set in $\C$.
\end{lem}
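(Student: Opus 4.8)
The plan is to apply the abstract Volterra estimate of Lemma~\ref{lem bounds for abstract Volterra operator} directly to the variation-of-parameters formula \eqref{eq formula of variation of parameters}. First I would rewrite that formula in the form $\frac{a_{n-1}}{\sqrt{n-1}}P_n={P_0}_n-(\mathcal V P)_n$, where the Volterra kernel is
\begin{equation*}
    V_{nk}:=\frac{(\Lambda I^+(\lambda))_kI^-_n(\lambda)-I^+_n(\lambda)(\Lambda I^-(\lambda))_k}{W(\lambda)},\ 1\le k\le n-1.
\end{equation*}
Since $\frac{a_{n-1}}{\sqrt{n-1}}=1+\frac{c_{n-1}}{\sqrt{n-1}}\to1$ by \eqref{eq conditions}, the prefactor is bounded and bounded away from zero for large $n$, so it can be absorbed harmlessly; the real content is the solvability of the perturbed equation in the Banach space $\B$. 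I would note that ${P_0}_n\in\B$ because by \eqref{eq relation between P and In^pm} and the asymptotics \eqref{eq asymptotics of In^pm} one has $|{P_0}_n(\lambda)|=O\l(\frac{e^{|\Im\,\lambda|\sqrt n}}{n^{1/4}}\r)$ (the dominant exponential is $e^{\pm i\lambda\sqrt n}$, whose modulus is $e^{\mp\Im\,\lambda\sqrt n}$, and summing $I^+_n+I^-_n$ picks out $e^{|\Im\,\lambda|\sqrt n}$), with the bound uniform on bounded sets.

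The crux is to verify that the quantity $\nu$ in \eqref{eq nu} associated with this kernel is finite, so that Lemma~\ref{lem bounds for abstract Volterra operator} gives $(I-\mathcal V)^{-1}$ bounded on $\B$; then $P=(I-\mathcal V)^{-1}\l(\frac{\sqrt{n-1}}{a_{n-1}}{P_0}_n\r)\in\B$, which is exactly the assertion \eqref{eq estimate for P}. To estimate $\nu$, I would insert the uniform asymptotics \eqref{eq asymptotics of In^pm} for $I^{\pm}_n$ and $I^{\pm}_k$ into $V_{nk}$. The key cancellation is that $I^-_n$ and $I^+_n$ each carry the factor $e^{\pm i\lambda\sqrt n}/n^{1/4}$, whose modulus against the weight $\l(\frac nk\r)^{1/4}e^{|\Im\,\lambda|(\sqrt k-\sqrt n)}$ in \eqref{eq nu} is uniformly controlled: the $n^{1/4}$ factors cancel, and the exponential weight is designed precisely so that $|I^-_n|e^{|\Im\,\lambda|(\sqrt k-\sqrt n)}n^{1/4}$ stays bounded (one of the two terms in $V_{nk}$ has the favorable exponential sign, and for the cross term the $e^{|\Im\,\lambda|\sqrt k}$ growth of $I^{\pm}_k$ in the numerator is matched by the $e^{-|\Im\,\lambda|\sqrt k}$ in the weight). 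Thus, up to a bounded constant $C(\lambda)$, I expect
\begin{equation*}
    \nu\le C(\lambda)\sup_{n>1}\sum_{k=1}^{n-1}\l(|(\Lambda I^+)_k|+|(\Lambda I^-)_k|\r)k^{1/4}e^{-|\Im\,\lambda|\sqrt k}.
\end{equation*}

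The main obstacle, therefore, is showing that this last series converges, i.e. that $(\Lambda I^{\pm})_k$ is summable after multiplication by $k^{1/4}e^{-|\Im\,\lambda|\sqrt k}$. Here I would unfold the definition \eqref{eq definition of Lambda}: $(\Lambda I^{\pm})_k=c_{k-1}I^{\pm}_{k-1}+b_kI^{\pm}_k+c_kI^{\pm}_{k+1}$. Combining $c_{k-1}I^{\pm}_{k-1}+c_kI^{\pm}_{k+1}$ and using $\sqrt{k-1}I^\pm_{k-1}+\sqrt k I^\pm_{k+1}=\lambda I^\pm_k$, I would group the terms so that the differences $c_k-c_{k-1}$ appear, converting the expression into one governed by $\frac{|c_k|}{k}$, $\frac{|c_{k+1}-c_k|}{\sqrt k}$ and $\frac{|b_k|}{\sqrt k}$—exactly the three summable quantities in condition \eqref{eq conditions}; the leftover factors $I^{\pm}_k\sqrt k$ and $k^{1/4}e^{-|\Im\,\lambda|\sqrt k}$ combine, via \eqref{eq asymptotics of In^pm}, to a bounded sequence. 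This matching of $\Lambda$ against the three summability hypotheses is the delicate bookkeeping step, and getting the summation-by-parts/regrouping right so that every surviving term is dominated by a term of the series in \eqref{eq conditions} is where the argument must be carried out with care. Uniformity in $\lambda$ on bounded sets follows throughout because the asymptotics \eqref{eq asymptotics of In^pm} are uniform and the constant $C(\lambda)$ depends only on bounds for $e^{\lambda^2/4}$ and $W(\lambda)^{-1}$, both continuous and hence bounded on bounded sets.
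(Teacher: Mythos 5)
Your overall architecture is exactly the paper's: recast \eqref{eq formula of variation of parameters} as $P=v+\mathcal VP$ in $\B$, note $v\in\B$ from Theorem \ref{thm result 1}, and invoke Lemma \ref{lem bounds for abstract Volterra operator} once $\nu<\infty$; your idea of regrouping $(\Lambda I^{\pm})_k$ so that the difference $c_k-c_{k-1}$ appears is also the paper's key cancellation (its estimate \eqref{eq estimate for Lamba I^pm}). But the quantitative step --- the finiteness of $\nu$ under exactly the hypotheses \eqref{eq conditions} --- fails as you have written it, because you have inverted the weight in \eqref{eq nu}. The weight attached to the index $k$ is $\l(\frac nk\r)^{1/4}e^{|\Im\,\lambda|(\sqrt k-\sqrt n)}$, i.e. $k^{-1/4}e^{+|\Im\,\lambda|\sqrt k}$ at index $k$, not $k^{1/4}e^{-|\Im\,\lambda|\sqrt k}$. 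This costs you precisely a factor $k^{-1/2}$: writing $\beta_k:=|b_k|+|c_k-c_{k-1}|+\frac{|c_k|}{\sqrt k}$, the correct estimate is $(\Lambda I^{\pm})_k=O\l(\beta_k\,k^{-1/4}e^{\mp\Im\,\lambda\sqrt k}\r)$, so with the true weight the two factors $k^{-1/4}$ (one from the amplitude, one from the weight) combine to give $\beta_k/\sqrt k=\frac{|c_k|}{k}+\frac{|c_k-c_{k-1}|+|b_k|}{\sqrt k}$, exactly the series in \eqref{eq conditions}; with your inverted weight they cancel instead, and your series $\sum_k\l(|(\Lambda I^+)_k|+|(\Lambda I^-)_k|\r)k^{1/4}e^{-|\Im\,\lambda|\sqrt k}$ is of size $\sum_k\beta_k$, which \eqref{eq conditions} does not make finite. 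Concretely, take $c\equiv0$ and $b_k=k^{-3/5}$: then \eqref{eq conditions} holds, yet for real $\lambda$ your series is comparable to $\sum_kb_k=\infty$, so your argument gives nothing, even though $\nu$ itself is finite. Your closing claim that $|I^{\pm}_k|\sqrt k\cdot k^{1/4}e^{-|\Im\,\lambda|\sqrt k}$ is bounded fails for the same reason: for real $\lambda$ it grows like $\sqrt k$.

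There is a second, related error: with the correct weight you cannot decouple the two factors of each term of $V_{nk}$ as your sketch does (bounding $|I^{\mp}_n|n^{1/4}e^{-|\Im\,\lambda|\sqrt n}$ by a constant and then summing the remaining $k$-dependent factor). For $\Im\,\lambda>0$ the cross term $I^+_n(\Lambda I^-)_k$ has $|(\Lambda I^-)_k|\,k^{-1/4}e^{|\Im\,\lambda|\sqrt k}\approx\beta_k\,k^{-1/2}e^{2|\Im\,\lambda|\sqrt k}$, which is wildly non-summable; the compensating factor $e^{-2|\Im\,\lambda|\sqrt n}$ sits inside $|I^+_n|n^{1/4}e^{-|\Im\,\lambda|\sqrt n}$ and is discarded by the decoupling. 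Your parenthetical remark that the $e^{|\Im\,\lambda|\sqrt k}$ growth ``is matched by the $e^{-|\Im\,\lambda|\sqrt k}$ in the weight'' is the misreading at the root of this: the weight carries $e^{+|\Im\,\lambda|\sqrt k}$. The paper avoids both problems by estimating the two products as wholes: thanks to the $\pm$ pairing, both $(\Lambda I^+)_kI^-_n$ and $I^+_n(\Lambda I^-)_k$ are $O\l(\beta_k\,e^{|\Im\,\lambda|(\sqrt n-\sqrt k)}/(nk)^{1/4}\r)$, this exponential is annihilated exactly by the weight $e^{|\Im\,\lambda|(\sqrt k-\sqrt n)}$ (using $k<n$), and what survives is $\beta_k/\sqrt k$. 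So: right framework and right cancellation inside $\Lambda I^{\pm}$, but the estimate of $\nu$ --- which is the entire content of this lemma --- must be redone with the correct weight and with the products kept paired.
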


\begin{proof}
Let us rewrite \eqref{eq formula of variation of parameters} as
\begin{equation*}
    P(\lambda)=v(\lambda)+\mathcal V(\lambda)P(\lambda),
\end{equation*}
where
\begin{equation*}
    \begin{array}{l}
    v_n(\lambda):=
    \l\{
        \begin{array}{l}
        1,\ n=1,\\
        \frac{\sqrt{n-1}}{a_{n-1}}{P_0}_n(\lambda),\ n\ge2,\\
        \end{array}
    \r.\\
    (\mathcal V(\lambda))_n:=
    \l\{
        \begin{array}{l}
        0,\ n=1,\\
        -\frac{\sqrt{n-1}}{a_{n-1}}\sum_{k=1}^{n-1}\frac{(\Lambda I^+(\lambda))_kI^-_n(\lambda)-
        I^+_n(\lambda)(\Lambda I^-(\lambda))_k}{W(\lambda)}u_k,\ n\ge2.\\\
        \end{array}
    \r.\\
    \end{array}
\end{equation*}
What we need to prove is that $P(\lambda)\in\mathcal B$ and
$\|P(\lambda)\|_{\mathcal{B}}$ is bounded on every bounded set in
$\C$. It will suffice to prove the same for
$\|v(\lambda)\|_{\mathcal{B}}$ and for $\nu(\lambda)$ related to
$\mathcal V(\lambda)$ by \eqref{eq nu}, due to Lemma \ref{lem
bounds for abstract Volterra operator}. First follows from the
asymptotics given by Theorem \ref{thm result 1}, so consider the
second. The kernel of $\mathcal V(\lambda)$ is
\begin{equation*}
    V_{nk}(\lambda):=-\frac{\sqrt{n-1}}{a_{n-1}}\frac{(\Lambda I^+(\lambda))_kI^-_n(\lambda)-I^+_n(\lambda)(\Lambda I^-(\lambda))_k}{W(\lambda)},\ 1\le k\le n-1.
\end{equation*}
Fix a bounded set $K\subset\C$. It follows from \eqref{eq asymptotics of In^pm} that
\begin{multline}\label{eq estimate for Lamba I^pm}
    (\Lambda I^{\pm}(\lambda))_k
    =\frac{\l|e^{\frac{\lambda^2}4}\r|}{(8\pi)^{\frac14}}\l|c_{k-1}\frac{i^{k-1}e^{i\lambda\sqrt{k-1}}}{(k-1)^{\frac14}}
    +b_k\frac{i^ke^{i\lambda\sqrt k}}{k^{\frac14}}
    +c_k\frac{i^{k+1}e^{i\lambda\sqrt{k+1}}}{(k+1)^{\frac14}}\r|
    \\
    +O\l(\frac{|c_{k-1}|+|b_k|+|c_k|}{k^{\frac34}}e^{\mp\Im\lambda\sqrt k}\r)
    \\
    =O\l(\l(|b_k|+|c_k-c_{k-1}|+\frac{|c_k|}{\sqrt k}\r)\frac{e^{\mp\Im\lambda\sqrt
    k}}{k^{\frac14}}\r)\as\kinf
\end{multline}
uniformly with respect to $\lambda\in K$. Hence there exists $C_1$
such that
\begin{multline*}
    |(\Lambda I^+(\lambda))_kI^-_n(\lambda)|, |I^+_n(\lambda)(\Lambda I^-(\lambda))_k|
    \\
    <C_1(\l(|b_k|+|c_k-c_{k-1}|+\frac{|c_k|}{\sqrt k}\r)\frac{e^{|\Im\, \lambda||\sqrt n-\sqrt
    k|}}{(nk)^{1/4}}
\end{multline*}
for every $n,k\in\mathbb{N}$. Therefore there exists $C_2$ such
that
\begin{equation*}
    \nu(\lambda)<C_2\sum_{k=1}^{\infty}\l(\frac{|c_k|}{k}+\frac{|c_k-c_{k-1}|+|b_k|}{\sqrt k}\r),
\end{equation*}
and this estimate is uniform with respect to $\lambda\in K$. This completes the proof.
\end{proof}

It is possible now to introduce the Jost function and to find the asymptotics of the polynomials.

\begin{lem}\label{lem asymptotics of polynomials}
    Let the condition \eqref{eq conditions} hold for $\mathcal J$. Then the function
    \begin{equation}\label{eq Jost function}
        F(\lambda):=1+i\sqrt{2\pi}e^{-\frac{\lambda^2}2}\sum_{n=1}^{\infty}(\Lambda I^+(\lambda))_nP_n(\lambda)
    \end{equation}
    is analytic in $\C_+$ and continuous in $\overline{\C_+}$.
    Polynomials of the first kind for $\mathcal J$, $P_n(\lambda)$, have the following asymptotics as $\ninf$:
    \begin{itemize}
        \item
            For $\lambda\in\C_+$,
            \begin{equation}\label{eq asymptotics C+}
                    P_n(\lambda)=F(\lambda)I_n^-(\lambda)+o\l(\frac{e^{\Im \lambda\sqrt n}}{n^{1/4}}\r)\as\ninf,
            \end{equation}
        \item
            For $\lambda\in\R$,
            \begin{equation}\label{eq asymptotics R}
                P_n(\lambda)=F(\lambda)I_n^-(\lambda)+\overline{F(\lambda)}I_n^+(\lambda)+o(n^{-\frac14})\as\ninf.
            \end{equation}
    \end{itemize}
\end{lem}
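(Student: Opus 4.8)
The plan is to read the asymptotics directly off the variation-of-parameters identity \eqref{eq formula of variation of parameters}, using the Wronskian value $W(\lambda)=ie^{\frac{\lambda^2}2}/\sqrt{2\pi}$ and the relation ${P_0}_n=I_n^++I_n^-$ from Theorem \ref{thm result 1}. Substituting these and collecting the coefficients of $I_n^-$ and $I_n^+$ separately (and using $1/W=-i\sqrt{2\pi}e^{-\frac{\lambda^2}2}$), \eqref{eq formula of variation of parameters} becomes
\begin{equation*}
    \frac{a_{n-1}}{\sqrt{n-1}}P_n(\lambda)=F_n(\lambda)I_n^-(\lambda)+G_n(\lambda)I_n^+(\lambda),
\end{equation*}
with $F_n(\lambda):=1+i\sqrt{2\pi}e^{-\frac{\lambda^2}2}\sum_{k=1}^{n-1}(\Lambda I^+)_kP_k$ and $G_n(\lambda):=1-i\sqrt{2\pi}e^{-\frac{\lambda^2}2}\sum_{k=1}^{n-1}(\Lambda I^-)_kP_k$. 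Then $F(\lambda)=\lim_nF_n(\lambda)$, and, introducing $G(\lambda):=\lim_nG_n(\lambda)$, the entire statement reduces to showing these limits exist with the claimed regularity and to controlling the error of replacing $F_n,G_n$ by $F,G$.

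First I would establish absolute, locally uniform convergence of both series on $\overline{\C_+}$. Combining the estimate \eqref{eq estimate for Lamba I^pm} for $(\Lambda I^\pm)_k$ with the polynomial bound \eqref{eq estimate for P} of Lemma \ref{lem estimates on growth of polunomials}, for $\lambda\in\overline{\C_+}$ the factors $e^{-\Im\lambda\sqrt k}$ and $e^{|\Im\lambda|\sqrt k}=e^{\Im\lambda\sqrt k}$ cancel, so that
\begin{equation*}
    (\Lambda I^+)_kP_k=O\!\l(\frac{1}{\sqrt k}\l(|b_k|+|c_k-c_{k-1}|+\frac{|c_k|}{\sqrt k}\r)\r),
\end{equation*}
uniformly on bounded subsets of $\overline{\C_+}$, and likewise for $(\Lambda I^-)_kP_k$. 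The right-hand side is summable by \eqref{eq conditions}, providing a $\lambda$-independent dominating series; hence both series converge absolutely and uniformly on bounded subsets of $\overline{\C_+}$. Since each summand is entire in $\lambda$, the Weierstrass theorem gives that $F$ and $G$ are analytic in $\C_+$, and uniform convergence up to the boundary gives continuity in $\overline{\C_+}$, which settles the first assertion.

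For the asymptotics I would use $a_{n-1}/\sqrt{n-1}=1+c_{n-1}/\sqrt{n-1}=1+o(1)$ and that $F_n-F$, $G_n-G$ are tails of convergent series, hence $o(1)$. When $\lambda\in\C_+$, the asymptotics \eqref{eq asymptotics of In^pm} show $I_n^+=O(e^{-\Im\lambda\sqrt n}/n^{1/4})$ is exponentially smaller than $I_n^-=O(e^{\Im\lambda\sqrt n}/n^{1/4})$, so $G_nI_n^+$, the prefactor correction times $FI_n^-$, and $(F_n-F)I_n^-$ are all $o(e^{\Im\lambda\sqrt n}/n^{1/4})$, giving \eqref{eq asymptotics C+}. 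For real $\lambda$ both $I_n^\pm$ are of size $O(n^{-1/4})$, and the same bookkeeping gives $P_n=FI_n^-+GI_n^++o(n^{-1/4})$; it then remains to identify $G=\overline F$. This follows from the real-axis symmetry $w^{(n)}(-x)=(-1)^n\overline{w^{(n)}(x)}$ of the error function, which yields $I_n^-(\lambda)=\overline{I_n^+(\lambda)}$ and $(\Lambda I^-)_n=\overline{(\Lambda I^+)_n}$ for real $\lambda$; since $P_n(\lambda)$ is then real, conjugating the series for $F$ reproduces exactly the series for $G$. Substituting $G=\overline F$ gives \eqref{eq asymptotics R}.

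The main obstacle is the passage between the two exponential regimes at the boundary: the convergence estimate, the analyticity, and the continuity all hinge on the exact cancellation $e^{-\Im\lambda\sqrt k}\cdot e^{\Im\lambda\sqrt k}=1$, available only because the growth rate in \eqref{eq estimate for P} precisely matches the decay built into $(\Lambda I^+)_k$; keeping this cancellation uniform up to $\R$ is what secures continuity of the Jost function on the real axis. The secondary delicate point is the conjugation identity $G=\overline F$, which gives the real-axis expansion \eqref{eq asymptotics R} its conjugate-symmetric structure and ultimately produces the factor $|F(\lambda)|^2$ in the Weyl-Titchmarsh formula.
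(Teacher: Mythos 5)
Your decomposition of \eqref{eq formula of variation of parameters} into $F_nI_n^-+G_nI_n^+$, your treatment of $F$, and your conjugation identity $G=\overline F$ on the real axis all coincide with the paper's proof. But there is one genuine error: the series defining $G(\lambda)$, i.e.\ $\sum_k(\Lambda I^-(\lambda))_kP_k(\lambda)$ up to the factor $1/W$, does \emph{not} converge for $\lambda\in\C_+$, so your claim that the exponential cancellation works ``likewise for $(\Lambda I^-)_kP_k$'' is false off the real axis. By \eqref{eq estimate for Lamba I^pm} the exponential attached to $(\Lambda I^-)_k$ is $e^{+\Im\lambda\sqrt k}$ (the lower sign in $e^{\mp\Im\lambda\sqrt k}$), so together with $P_k=O(e^{\Im\lambda\sqrt k}k^{-1/4})$ the general term is of size $\epsilon_ke^{2\Im\lambda\sqrt k}$, where $\epsilon_k:=\frac{|c_k|}k+\frac{|c_{k+1}-c_k|+|b_k|}{\sqrt k}$ is summable; for $\Im\lambda>0$ such a series generically diverges (take $\epsilon_k\sim k^{-2}$). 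Hence $G(\lambda)$ does not exist in $\C_+$, and your argument that $G_nI_n^+=o(e^{\Im\lambda\sqrt n}/n^{1/4})$ because ``$G_n-G$ are tails of convergent series, hence $o(1)$'' collapses exactly where it is needed. Nor can it be patched by a crude bound: $|G_n|\le1+Ce^{2\Im\lambda\sqrt n}\sum_k\epsilon_k$ only yields $G_nI_n^+=O(e^{\Im\lambda\sqrt n}/n^{1/4})$, not $o$.

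The repair is what the paper does at this point: in $\C_+$ one should not pass to any limit $G$ at all, but estimate the whole first term relative to $I_n^-$,
\begin{equation*}
    \l|\frac{G_n(\lambda)I_n^+(\lambda)}{I_n^-(\lambda)}\r|
    =O\l(e^{-2\Im\lambda\sqrt n}+\sum_{k=1}^{n-1}e^{2\Im\lambda(\sqrt k-\sqrt n)}\epsilon_k\r)\as\ninf,
\end{equation*}
and observe that the right-hand side tends to $0$ by dominated convergence for series: for each fixed $k$ the summand tends to $0$ (since $\sqrt k-\sqrt n\to-\infty$ and $\Im\lambda>0$), and it is dominated by the summable sequence $\epsilon_k$ because $\sqrt k<\sqrt n$. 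With this substitution your proof becomes correct, and the remaining parts --- the locally uniform convergence and regularity of $F$ on $\overline{\C_+}$, the $o(1)$ bookkeeping with the prefactor $a_{n-1}/\sqrt{n-1}$, and the symmetry argument giving $G=\overline F$ on $\R$ (where the $G$-series does converge, being termwise the conjugate of the $F$-series) --- agree with the paper's argument.
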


\begin{proof}
Let us rewrite \eqref{eq formula of variation of parameters} as
\begin{multline}\label{eq intermediate}
    P_n(\lambda)\frac{a_{n-1}}{\sqrt{n-1}}
    \\
    =\l(1+\sum_{k=1}^{n-1}\frac{(\Lambda I^-(\lambda))_kP_k(\lambda)}{W(\lambda)}\r)I_n^+(\lambda)
    +\l(1-\sum_{k=1}^{n-1}\frac{(\Lambda I^+(\lambda))_kP_k(\lambda)}{W(\lambda)}\r)I_n^-(\lambda).
\end{multline}
From the estimates on $(\Lambda I^+(\lambda))_k$ and
$P_k(\lambda)$ \eqref{eq estimate for Lamba I^pm} and \eqref{eq
estimate for P} it follows that
\begin{equation*}
    (\Lambda I^+(\lambda))_kP_k(\lambda)=O\l(\frac{|c_k|}k+\frac{|c_{k+1}-c_k|+|b_k|}{\sqrt k}\r)\as\kinf
\end{equation*}
uniformly with respect to $\lambda$ on every compact subset of
$\overline{\C_+}$. Hence the expression
\begin{equation*}
    F_n(\lambda):=1-\sum_{k=1}^{n-1}\frac{(\Lambda I^+(\lambda)_kP_k(\lambda)}{W(\lambda)}
\end{equation*}
converges as $\ninf$ to the function
\begin{equation*}
    F(\lambda):=1-\sum_{k=1}^{\infty}\frac{(\Lambda I^+(\lambda))_kP_k(\lambda)}{W(\lambda)}
\end{equation*}
analytic in $\C_+$ and continuous in $\overline{\C_+}$.

Consider $\lambda\in\C_+$. The first term in \eqref{eq
intermediate} is relatively small. Indeed,
\begin{multline*}
    \l|\frac{I_n^+(\lambda)\l(1+\sum\limits_{k=1}^{n-1}\frac{(\Lambda I^-(\lambda)_kP_k(\lambda)}{W(\lambda)}\r)}{I_n^-(\lambda)}\r|=
    \\
    =O\l(e^{-2\Im\lambda\sqrt n}+\sum_{k=1}^{n-1}e^{2\Im\lambda(\sqrt k-\sqrt n)}\l(\frac{|c_k|}k+\frac{|c_{k+1}-c_k|+|b_k|}{\sqrt k}\r)\r)=o(1)
\end{multline*}
as $\ninf$. This means that
$\frac{P_n(\lambda)}{I_n^-(\lambda)}\rightarrow F(\lambda)$ as
$\ninf$.

Consider $\lambda\in\R$. Equation \eqref{eq intermediate} yields:
\begin{multline*}
    P_n(\lambda)
    =\l(F_n(\lambda)\frac{\sqrt{n-1}}{a_{n-1}}\r)I_n^-(\lambda)
    +\l(\overline{F_n(\lambda)}\frac{\sqrt{n-1}}{a_{n-1}}\r)I_n^+(\lambda).
    \\
    =F(\lambda)I_n^-(\lambda)+\overline{F(\lambda)}I_n^+(\lambda)+o(n^{-\frac14})\as\ninf
\end{multline*}
due to asymptotics \eqref{eq asymptotics of In^pm} of
$I_n^{\pm}(\lambda)$ and the convergence of $F_n(\lambda)$. The
proof is complete.
\end{proof}

The final step is the proof of the absolute continuity of the
spectrum of $\mathcal J$ and the formula for the spectral density.

\begin{lem}\label{lem formula for the spectral density}
    Let the condition \eqref{eq conditions} hold for $\mathcal J$. Then the
    spectrum of $\mathcal J$ is purely absolutely continuous
    and for a.a. $\lambda\in\mathbb{R}$ the following formula
    holds:
    \begin{equation}\label{eq formula for the spectral density}
        \rho'(\lambda)=\frac{e^{-\frac{\lambda^2}2}}{\sqrt{2\pi}|F(\lambda)|^2},
    \end{equation}
    where $F(\lambda)$ defined by \eqref{eq Jost function} and does not vanish on $\R$.
\end{lem}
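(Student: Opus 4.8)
The plan is to compute the Weyl function $m(\lambda)$ explicitly from the asymptotics of the polynomials and then read off the density via Fatou's formula $\rho'(\lambda)=\frac1\pi\Im m(\lambda+i0)$. First I would run the machinery of this section (Lemma \ref{lem variation of parameters formula}, Lemma \ref{lem bounds for abstract Volterra operator}, Lemma \ref{lem asymptotics of polynomials}) a second time, verbatim, for the polynomials of the second kind $Q_n(\lambda)$: they solve the same recurrence as $P_n$, only with the initial data $Q_1=0$, $Q_2=\frac1{a_1}$, so the Volterra equation \eqref{eq formula of variation of parameters} and the bound from Lemma \ref{lem bounds for abstract Volterra operator} apply with the same kernel $V_{nk}(\lambda)$ and only a different inhomogeneous term. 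This produces a second Jost-type function $G(\lambda)$, analytic in $\C_+$ and continuous in $\overline{\C_+}$, with $Q_n(\lambda)=G(\lambda)I_n^-(\lambda)+o\l(\frac{e^{\Im\lambda\sqrt n}}{n^{1/4}}\r)$ for $\lambda\in\C_+$ and $Q_n(\lambda)=G(\lambda)I_n^-(\lambda)+\overline{G(\lambda)}I_n^+(\lambda)+o(n^{-1/4})$ for $\lambda\in\R$, exactly as in Lemma \ref{lem asymptotics of polynomials}.

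Next I would identify $m$. For $\lambda\in\C_+$ the solution $I_n^-(\lambda)$ grows like $e^{\Im\lambda\sqrt n}$ while $I_n^+(\lambda)$ decays, so $I^+$ is, up to a scalar, the unique $l^2$ solution. The Weyl solution $Q_n+m(\lambda)P_n$ lies in $l^2$, hence its dominant $I^-$-component must vanish; matching the $I^-$-coefficients of the two expansions gives $G(\lambda)+m(\lambda)F(\lambda)=0$, i.e.
$$m(\lambda)=-\frac{G(\lambda)}{F(\lambda)},\quad\lambda\in\C_+,$$
an expression that extends continuously to $\overline{\C_+}$ at every point where $F$ does not vanish.

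The value of $\overline FG$ on the real axis I would extract from the constancy of the Wronskian. Since $W(P,Q)\equiv1$, I would pass to the limit $\ninf$ in $1=a_n(P_nQ_{n+1}-P_{n+1}Q_n)$ using the real-axis asymptotics of $P_n$ and $Q_n$. Writing $a_n=\sqrt n+c_n$, the leading contribution is the exact free Wronskian of the two limiting combinations, which equals $(\overline F(\lambda)G(\lambda)-F(\lambda)\overline G(\lambda))W(\lambda)=2i\,\Im(\overline FG)\,W(\lambda)$, while the $c_n$-term and all terms involving the $o(n^{-1/4})$-remainders are $o(1)$. With $W(\lambda)=i\frac{e^{\lambda^2/2}}{\sqrt{2\pi}}$ from the Lemma in Section \ref{section free} this yields $\Im(\overline F(\lambda)G(\lambda))=-\frac{\sqrt{2\pi}}2e^{-\frac{\lambda^2}2}$ for every $\lambda\in\R$. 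In particular $\overline FG\neq0$, so $F(\lambda)\neq0$ on $\R$, and therefore $m(\lambda+i0)=-\frac{\overline FG}{|F|^2}$ is well defined with $\Im m(\lambda+i0)=\frac{\sqrt{2\pi}e^{-\lambda^2/2}}{2|F(\lambda)|^2}$. Fatou's formula then gives \eqref{eq formula for the spectral density}.

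Finally, for pure absolute continuity I would invoke that the singular part of the Herglotz measure $\rho$ is carried by the set $\{\lambda\in\R:\ \Im m(\lambda+i0)=+\infty\}$. Since $F$ is continuous and nonvanishing on $\R$ and is given by a convergent series (so $0<|F(\lambda)|<\infty$), the boundary value $\Im m(\lambda+i0)$ exists and is finite at every real $\lambda$; hence this carrier set is empty and $\rho$ has no singular part. I expect the main obstacle to be the limit transition in the Wronskian identity: one must check that, with only the $o(n^{-1/4})$ accuracy of the asymptotics and the perturbed weight $a_n=\sqrt n+c_n$, the cross terms (remainder against leading term, of size $\sqrt n\cdot n^{-1/4}\cdot o(n^{-1/4})=o(1)$), the remainder--remainder terms, and the $c_n$-term (of size $c_n/\sqrt n=o(1)$ since $c_n=o(\sqrt n)$) all vanish, leaving exactly the free Wronskian of the limiting combinations. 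The re-derivation of the second-kind asymptotics is routine given the earlier lemmas, and the exclusion of singular spectrum is then immediate.
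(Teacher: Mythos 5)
Your proof is correct and follows essentially the same route as the paper: a second Jost-type function for $Q_n$, the identification $m(\lambda)=-G(\lambda)/F(\lambda)$ from the $l^2$ Weyl solution, the Wronskian limit giving $G(\lambda)\overline{F(\lambda)}-\overline{G(\lambda)}F(\lambda)=-i\sqrt{2\pi}e^{-\frac{\lambda^2}2}$ (hence $F\neq0$ on $\R$), and then finite, continuous boundary values of $m$ yielding the density and pure absolute continuity. The paper differs in two inessential points. First, instead of re-running the Volterra machinery for $Q_n$ with the initial data $Q_1=0$, $Q_2=\frac1{a_1}$ (which does work, since the derivation of Lemma \ref{lem variation of parameters formula} only uses the recurrence for $n\ge2$, and the new inhomogeneous term is a fixed combination of $I^{\pm}$, hence admissible), the paper observes that $a_1Q_n(\lambda)$ are exactly the first-kind polynomials of the cropped matrix $\mathcal J_1$ (first row and column removed), which again satisfies \eqref{eq conditions}, so Lemma \ref{lem asymptotics of polynomials} applies as a black box and produces your $G$ (called $F_1$ there). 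Second, for the exclusion of singular spectrum you invoke the standard Herglotz-measure fact that the singular part is carried by $\{\lambda:\Im\, m(\lambda+i0)=+\infty\}$, while the paper cites the Khan--Pearson subordinacy theory \cite{KP}; these are equivalent conclusions from the same input (existence of a finite, continuous limit $m(\lambda+i0)$ at every real $\lambda$), so both arguments buy the same theorem, the paper's cropping trick simply being the more economical way to handle $Q_n$.
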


\begin{proof}
Polynomials of the second kind have asymptotics of the same type
as polynomials of the first kind. Cropped Jacobi matrix $\mathcal
J_1$ being the original one $\mathcal J$ with the first row and
the first column removed,
\begin{equation*}
    \mathcal J_1=
    \left(%
    \begin{array}{cccc}
    b_2 & a_2 & 0 & \cdots \\
    a_2 & b_3& a_3 & \cdots \\
    0 & a_3 & b_4 & \cdots \\
    \vdots & \vdots & \vdots & \ddots \\
    \end{array}%
    \right),
\end{equation*}
satisfies conditions of Lemma \ref{lem asymptotics of
polynomials}. And the polynomials $a_1Q_n(\lambda)$ are the
polynomials of the first kind for $\mathcal J_1$, so there exists
a function $F_1(\lambda)$, analytic in $\C_+$ and continuous in
$\overline{\C_+}$, such that
\begin{itemize}
    \item
        For $\lambda\in\C_+$, $Q_n(\lambda)=F_1(\lambda)I_n^-(\lambda)+o\l(\frac{e^{\Im
        \lambda\sqrt n}}{n^{1/4}}\r)$ as $\ninf$,
    \item
        For $\lambda\in\R$, $Q_n(\lambda)=F_1(\lambda)I_n^-(\lambda)+\overline{F_1(\lambda)}I_n^+(\lambda)+o(n^{-\frac14})$ as
        $\ninf$.
\end{itemize}
The combination $Q_n(\lambda)+m(\lambda)P_n(\lambda)$ belongs to
$l^2$ for $\lambda\in\C_+$, hence
\begin{equation*}
    m(\lambda)=-\frac{F_1(\lambda)}{F(\lambda)}\text{ for }\lambda\in\C_+.
\end{equation*}
Consider $\lambda\in\R$. One has:
\begin{multline*}
    1=W(P,Q)=(\sqrt n+c_n)(P_nQ_{n+1}-P_{n+1}Q_n)
    \\
    =\sqrt n(I^+_nI^-_{n+1}-I^+_{n+1}I^-_n)(\overline FF_1-F\overline F_1)+o(1)
    \\
    =W(I^+,I^-)(\overline FF_1-F\overline F_1),
\end{multline*}
therefore
\begin{equation*}
    F_1(\lambda)\overline{F(\lambda)}-\overline{F_1(\lambda)}F(\lambda)=-i\sqrt{2\pi}e^{-\frac{\lambda^2}2}
\end{equation*}
for $\lambda\in\mathbb R$, and hence for every
$\lambda\in\overline{\C_+}$. It follows that $F(\lambda)$ and
$F_1(\lambda)$ do not have zeros in $\overline{\C_+}$. For every
$\lambda\in\mathbb R$ there exists the finite limit
\begin{equation*}
    m(\lambda+i0)=-\frac{F_1(\lambda)}{F(\lambda)},
\end{equation*}
which is continuous in $\lambda$. It follows then \cite{KP} that
the spectrum of $\mathcal J$ is purely absolutely continuous and
the spectral density equals for a.a. $\lambda\in\mathbb R$
\begin{equation*}
    \rho'(\lambda)=\frac1{\pi}\Im\, m(\lambda+i0)
    =\frac{F(\lambda)\overline{F_1(\lambda)}-\overline{F(\lambda)}F_1(\lambda)}{2\pi
    i|F(\lambda)|^2}=\frac{e^{-\frac{\lambda^2}2}}{\sqrt{2\pi}|F(\lambda)|^2},
\end{equation*}
which completes the proof.
\end{proof}

Theorem \ref{thm result 2} follows directly from Lemmas \ref{lem
asymptotics of polynomials} and \ref{lem formula for the spectral
density}. Let us repeat its formulation.

\begin{thm*}
    Let the conditions \eqref{eq conditions} hold for $\mathcal J$.
    Then
    \\
    1. For every $\lambda\in\overline{\mathbb C_+}$ there exists
    \begin{equation*}
        F(\lambda):=1+i\sqrt{2\pi}e^{-\frac{\lambda^2}2}\sum_{n=1}^{\infty}(\Lambda I^+(\lambda))_nP_n(\lambda)
    \end{equation*}
    (the Jost function), which is analytic function in $\C_+$ and continuous in $\overline{\C_+}$.
    \\
    2. Polynomials of the first kind have the following asymptotics as $\ninf$:
    \begin{itemize}
        \item
            For $\lambda\in\C_+$,
            \begin{equation*}
                    P_n(\lambda)=F(\lambda)I_n^-(\lambda)+o\l(\frac{e^{\Im \lambda\sqrt n}}{n^{1/4}}\r)\as\ninf,
            \end{equation*}
        \item
            For $\lambda\in\R$,
            \begin{equation*}
                P_n(\lambda)=F(\lambda)I_n^-(\lambda)+\overline{F(\lambda)}I_n^+(\lambda)+o(n^{-\frac14})\as\ninf.
            \end{equation*}
    \end{itemize}
3. The spectrum of $\mathcal J$ is purely absolutely continuous,
and for a.a. $\lambda\in\mathbb R$
    \begin{equation*}
        \rho'(\lambda)=\frac{e^{-\frac{\lambda^2}2}}{\sqrt{2\pi}|F(\lambda)|^2}.
    \end{equation*}
\end{thm*}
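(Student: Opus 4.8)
The plan is to assemble the three assertions from the Volterra-equation analysis developed in Lemmas \ref{lem variation of parameters formula}--\ref{lem formula for the spectral density}: parts 1 and 2 are the content of Lemma \ref{lem asymptotics of polynomials}, and part 3 is Lemma \ref{lem formula for the spectral density}. The organizing principle is that, under \eqref{eq conditions}, the spectral equation for $\mathcal J$ is a summable perturbation of the free equation \eqref{eq spectral equation J0}, for which Section \ref{section free} already supplies the explicit solutions $I_n^{\pm}(\lambda)$, their asymptotics \eqref{eq asymptotics of In^pm}, and the Wronskian $W(\lambda)=ie^{\lambda^2/2}/\sqrt{2\pi}$. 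All three parts are then read off from the behaviour of $P_n(\lambda)$ as $\ninf$ relative to this free basis.

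First I would recast the recurrence for $P_n$ as the Volterra-type identity \eqref{eq formula of variation of parameters}, i.e.\ $P=v+\mathcal V P$, with $v_n$ assembled from ${P_0}_n$ and the kernel $V_{nk}$ assembled from $\Lambda I^{\pm}$ and $W$. The weighted space $\mathcal B$ is tailored so that a free solution has bounded norm: its weight $e^{|\Im\lambda|\sqrt n}/n^{1/4}$ matches \eqref{eq asymptotics of In^pm}. Substituting the kernel bound \eqref{eq estimate for Lamba I^pm} into the quantity $\nu(\lambda)$ of \eqref{eq nu} shows that $\nu(\lambda)$ is controlled by the convergent series in \eqref{eq conditions}, uniformly on bounded sets; Lemma \ref{lem bounds for abstract Volterra operator} then furnishes $(I-\mathcal V)^{-1}$ and with it the uniform growth estimate \eqref{eq estimate for P} of Lemma \ref{lem estimates on growth of polunomials}. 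With $P_n$ thus controlled, the terms of the series defining $F(\lambda)$ are $O\l(\tfrac{|c_k|}{k}+\tfrac{|c_{k+1}-c_k|+|b_k|}{\sqrt k}\r)$ and the series converges to a function analytic in $\C_+$ and continuous on $\overline{\C_+}$, which is part 1. Splitting \eqref{eq formula of variation of parameters} into its $I_n^+$ and $I_n^-$ contributions and noting that for $\lambda\in\C_+$ the $I_n^+$ term is exponentially subdominant (it carries $e^{-\Im\lambda\sqrt n}$ against the growing $e^{+\Im\lambda\sqrt n}$ of $I_n^-$) gives the two asymptotic formulas of part 2.

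For part 3 I would run the same machinery on the cropped matrix $\mathcal J_1$, whose first-kind polynomials are $a_1Q_n(\lambda)$, obtaining a second Jost function $F_1(\lambda)$ with analogous asymptotics. Since $Q_n+m(\lambda)P_n\in l^2$ for $\lambda\in\C_+$, matching the growing $I_n^-$ parts forces their coefficients to cancel, $m(\lambda)=-F_1(\lambda)/F(\lambda)$. Evaluating the constant Wronskian $W(P,Q)\equiv1$ through the asymptotics, together with the known value of $W(I^+,I^-)$, yields on $\R$ the identity $F_1\overline F-\overline{F_1}F=-i\sqrt{2\pi}e^{-\lambda^2/2}$; this shows at once that $F$ has no zeros on $\overline{\C_+}$ and that $m(\lambda+i0)=-F_1/F$ exists and is continuous in $\lambda$. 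Absolute continuity of the spectrum and the formula $\rho'=\tfrac1\pi\Im\,m(\lambda+i0)=e^{-\lambda^2/2}/(\sqrt{2\pi}|F(\lambda)|^2)$ then follow from the criterion of \cite{KP} and the Wronskian identity.

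I expect the main obstacle to be the uniform control of the Volterra iteration, concentrated in the kernel estimate \eqref{eq estimate for Lamba I^pm}. The naive bound on $(\Lambda I^{\pm})_k$ involves the three separately non-summable quantities $c_{k-1}$, $b_k$, $c_k$ weighted by $k^{-1/4}$; the estimate only becomes summable after the leading oscillatory parts of the adjacent $I^{\pm}$ coefficients are combined, which is exactly why \eqref{eq conditions} is phrased in terms of the difference $|c_{k+1}-c_k|$ rather than $|c_k|$ alone, alongside $|c_k|/\sqrt k$ and $|b_k|/\sqrt k$. Securing this summability uniformly on bounded subsets of $\C$ is the crux; once it is in place, the remaining steps are bookkeeping with the explicit asymptotics and the single Wronskian identity.
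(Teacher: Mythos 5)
Your proposal is correct and follows essentially the same route as the paper: the variation-of-parameters identity \eqref{eq formula of variation of parameters}, the Volterra bound of Lemma \ref{lem bounds for abstract Volterra operator} yielding the growth estimate \eqref{eq estimate for P}, the Jost function and asymptotics of Lemma \ref{lem asymptotics of polynomials}, and the cropped-matrix/Wronskian argument with the subordinacy criterion of \cite{KP} giving Lemma \ref{lem formula for the spectral density}. Your identification of the crux --- that summability of the kernel requires combining the oscillatory parts of adjacent terms so that only $|c_{k+1}-c_k|$, $|b_k|/\sqrt k$ and $|c_k|/k$ appear --- is exactly the paper's key estimate \eqref{eq estimate for Lamba I^pm}.
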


It remains to prove the following corollary.

\begin{cor*}
    Let the conditions \eqref{eq conditions} hold for $\mathcal J$.
    Then the spectrum of $\mathcal J$ is purely absolutely continuous
    and the spectral density equals for a.a. $\lambda\in\mathbb R$
    \begin{equation}\label{eq rho in terms of polynomials}
        \rho'(\lambda)=\frac1{\pi}\lim_{\ninf}\frac1{\sqrt n(P_n^2(\lambda)+P_{n+1}^2(\lambda))},
    \end{equation}
    the right-hand side being finite and non-zero for every
    $\lambda\in\R$.
\end{cor*}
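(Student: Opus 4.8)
The plan is to derive the formula in \eqref{eq rho in terms of polynomials} directly from the real-line asymptotics \eqref{eq asymptotics R} established in Lemma \ref{lem asymptotics of polynomials}, combined with the spectral density formula \eqref{eq formula for the spectral density} from Lemma \ref{lem formula for the spectral density}. Since both results are already in hand, purely absolute continuity is immediate; the content of the corollary is the alternative expression for $\rho'(\lambda)$ in terms of the polynomials themselves, with no reference to the Jost function $F$. So the task reduces to showing that $\frac{1}{\pi}\lim_{\ninf}\frac{1}{\sqrt n(P_n^2+P_{n+1}^2)}$ coincides with $\frac{e^{-\lambda^2/2}}{\sqrt{2\pi}|F(\lambda)|^2}$ for $\lambda\in\R$.

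\medskip

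The core computation is to evaluate $\sqrt n(P_n^2(\lambda)+P_{n+1}^2(\lambda))$ asymptotically using \eqref{eq asymptotics R}. First I would substitute $P_n=FI_n^-+\overline FI_n^++o(n^{-1/4})$ and the Plancherel-Rotach asymptotics \eqref{eq asymptotics of In^pm}, which for real $\lambda$ give $I_n^{\pm}(\lambda)=\frac{(\mp i)^{n-1}e^{\lambda^2/4\pm i\lambda\sqrt n}}{(8\pi n)^{1/4}}(1+O(n^{-1/2}))$. Note that for real $\lambda$ one has $\overline{I_n^-(\lambda)}=I_n^+(\lambda)$, so $P_n$ is real (as it must be) and $P_n=2\,\Re(FI_n^-)+o(n^{-1/4})$. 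Writing $F=|F|e^{i\arg F}$ and collecting the oscillatory exponentials, $P_n^2$ will contain a constant term and a term oscillating like $\cos(2\lambda\sqrt n+\cdots)$; the key algebraic fact is that when one \emph{adds} $P_n^2+P_{n+1}^2$, the phase shift between the two arguments $\lambda\sqrt n$ and $\lambda\sqrt{n+1}$ is such that the oscillating parts, after accounting for the factor $(-i)^{n-1}$ versus $(-i)^n$, combine to kill the oscillation and leave twice the mean value. Explicitly, I expect $\sqrt n(P_n^2+P_{n+1}^2)\to\frac{|F(\lambda)|^2e^{\lambda^2/2}}{\sqrt{2\pi}}\cdot 2\cdot\frac12$, so that $\sqrt n(P_n^2+P_{n+1}^2)\to\frac{\sqrt{2}|F(\lambda)|^2e^{\lambda^2/2}}{\sqrt{2\pi}}$, and the reciprocal divided by $\pi$ reproduces \eqref{eq formula for the spectral density}.

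\medskip

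The main obstacle is verifying that the oscillatory cross-terms cancel in the sum $P_n^2+P_{n+1}^2$ rather than merely averaging out, since individually $\sqrt n\,P_n^2$ does \emph{not} converge (it oscillates). The mechanism is that $|I_n^-|^2\sim|I_{n+1}^-|^2\sim\frac{e^{\lambda^2/2}}{\sqrt{8\pi n}}$ contribute additively while the interference term $2\Re(F^2(I_n^-)^2)$ carries a factor $(-i)^{2(n-1)}=(-1)^{n-1}$ that flips sign between index $n$ and $n+1$, so that the leading oscillatory contributions to $P_n^2$ and $P_{n+1}^2$ are asymptotically negatives of one another and cancel. I would make this precise by expanding $(I_n^-)^2+(I_{n+1}^-)^2$ and showing the phase factors $e^{2i\lambda\sqrt n}$ and $-e^{2i\lambda\sqrt{n+1}}$ differ by $o(1)$ after factoring the common slowly-varying modulus, using $\sqrt{n+1}-\sqrt n=O(n^{-1/2})\to0$. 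Once this cancellation is established, the limit is finite and strictly positive because $|F(\lambda)|$ never vanishes on $\R$ (shown in Lemma \ref{lem formula for the spectral density}), which yields the final assertion that the right-hand side of \eqref{eq rho in terms of polynomials} is finite and non-zero for every $\lambda\in\R$.
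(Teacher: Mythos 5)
Your strategy coincides with the paper's own proof: the corollary is deduced from the real-line asymptotics \eqref{eq asymptotics R} together with \eqref{eq asymptotics of In^pm}, by noting that in $P_n^2+P_{n+1}^2$ the oscillating (interference) terms cancel — because $(I_n^-)^2$ carries the factor $i^{2(n-1)}=(-1)^{n-1}$, which flips sign between consecutive indices while the phase $e^{-2i\lambda\sqrt n}$ and the modulus vary only by $O(n^{-1/2})$ — whereas the non-oscillating parts $2|F|^2|I_n^-|^2$ add up; the resulting limit is then substituted into \eqref{eq formula for the spectral density}, and finiteness and positivity follow from the non-vanishing of $F$ on $\R$. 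All of this is exactly the paper's argument.

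However, your constants are inconsistent, and with the values you state the final check does not close. From $P_n=2\Re\left(F(\lambda)I_n^-(\lambda)\right)+o(n^{-1/4})$ and $|I_n^-(\lambda)|^2=\frac{e^{\lambda^2/2}}{\sqrt{8\pi n}}\left(1+O(n^{-1/2})\right)$ one gets, after the cancellation of the interference terms,
\begin{equation*}
    P_n^2(\lambda)+P_{n+1}^2(\lambda)=\frac{4|F(\lambda)|^2e^{\frac{\lambda^2}2}}{\sqrt{8\pi n}}+o\left(\frac1{\sqrt n}\right),
    \qquad\text{hence}\qquad
    \lim_{\ninf}\sqrt n\left(P_n^2(\lambda)+P_{n+1}^2(\lambda)\right)=\frac{2|F(\lambda)|^2e^{\frac{\lambda^2}2}}{\sqrt{2\pi}},
\end{equation*}
and then $\frac1{\pi}\cdot\frac{\sqrt{2\pi}}{2|F(\lambda)|^2e^{\lambda^2/2}}=\frac{e^{-\lambda^2/2}}{\sqrt{2\pi}|F(\lambda)|^2}$, which is \eqref{eq formula for the spectral density}. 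You instead assert the limit is $\frac{|F(\lambda)|^2e^{\lambda^2/2}}{\sqrt{2\pi}}\cdot2\cdot\frac12$ and, in the same sentence, $\frac{\sqrt2|F(\lambda)|^2e^{\lambda^2/2}}{\sqrt{2\pi}}$; these disagree with each other (by a factor $\sqrt2$) and with the correct value (by factors $2$ and $\sqrt2$ respectively), and either of them, after taking the reciprocal and dividing by $\pi$, yields $\sqrt{\tfrac2{\pi}}\,\frac{e^{-\lambda^2/2}}{|F(\lambda)|^2}$ or $\frac{e^{-\lambda^2/2}}{\sqrt{\pi}|F(\lambda)|^2}$ rather than \eqref{eq formula for the spectral density}. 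This is an arithmetic slip rather than a conceptual gap — the cancellation mechanism you describe is the right one and redoing the bookkeeping as above repairs the proof — but as written the chain of constants does not verify the claimed formula \eqref{eq rho in terms of polynomials}.
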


\begin{proof}
From the asymptotics \eqref{eq asymptotics R} and \eqref{eq
asymptotics of In^pm} one has for $\lambda\in\mathbb R$:
\begin{equation*}
    P_n^2(\lambda)+P_{n+1}^2(\lambda)=\frac{4|F(\lambda)|^2e^{\frac{\lambda^2}2}}{\sqrt{8\pi n}}
    +o\l(\frac1{\sqrt n}\r),
\end{equation*}
so
\begin{equation*}
    \frac1{|F(\lambda)|^2}=\sqrt{\frac2{\pi}}\frac{e^{\frac{\lambda^2}2}}{\lim\limits_{\ninf}\sqrt
    n(P_n^2(\lambda)+P_{n+1}^2)}.
\end{equation*}
Substituting into \eqref{eq formula for the spectral density}
gives the answer and completes the proof.
\end{proof}

\section{Appendix. Asymptotics of derivatives of the error function}
This section is devoted to finding asymptotics of $w^{(n)}(z)$ as
$\ninf$. It is natural to prove a little wider result: asymptotics
of $w^{(n-1)}(\mu\sqrt{2n})$ as $\ninf$ uniform with respect to
the parameter $\mu$ in some neighbourhood of the point $0$. Such
asymptotics (with the scaled parameter) are called asymptotics of
Plancherel-Rotach type, after \cite{Plancherel-Rotach}, where the
authors proved such asymptotics for Hermite polynomials. Let
\begin{equation*}
    \varphi(z):=z+\sqrt{z^2-1}
\end{equation*}
be the inverse Zoukowski function with the branch chosen such that
$\varphi(0)=i$.

\begin{thm}\label{thm asymptotics of w^n-1musqrt2n}
    There exist $\mu_0$ such that
    \begin{equation}\label{eq asymptotics of w^n-1musqrt2n}
        w^{(n-1)}(\mu\sqrt{2n})=
        \sqrt{\frac2n}^{\: n}\frac{(n-1)!(-1)^{n-1}}{\sqrt{\pi}\sqrt{1-\varphi^2(\mu)}}
        \frac{e^{-\frac n2(\varphi(\mu)-2\mu)^2}}{(\varphi(\mu))^{n-1}}
        \l(1+O\l(\frac1{\sqrt n}\r)\r)
    \end{equation}
    as $\ninf$ uniformly with respect to $|\mu|<\mu_0$.
\end{thm}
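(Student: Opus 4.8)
The plan is to evaluate the contour integral
\begin{equation*}
    w^{(n-1)}(z)=\frac{(n-1)!}{\pi i}\int_{\Gamma_z^-}\frac{e^{-\zeta^2}}{(\zeta-z)^{n}}\,d\zeta
\end{equation*}
(the formula for $w^{(n)}$ from the excerpt, with $n$ replaced by $n-1$) at $z=\mu\sqrt{2n}$ by the method of steepest descent. First I would rescale $\zeta=\sqrt{2n}\,t$. This sends the pole to $t=\mu$ and turns the integrand into $e^{-2nt^2}(t-\mu)^{-n}$, so that
\begin{equation*}
    w^{(n-1)}(\mu\sqrt{2n})=\frac{(n-1)!\sqrt{2n}}{\pi i\,(2n)^{n/2}}\int_{\Gamma'}e^{-ng(t)}\,dt,\qquad g(t)=2t^2+\log(t-\mu),
\end{equation*}
where $n$ now plays the role of the large parameter and $\Gamma'$ is the rescaled contour. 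Only the single-valued product $e^{-2nt^2}(t-\mu)^{-n}$ really enters, so the logarithm serves merely to locate the saddle.

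Second, I would find the saddle points from $g'(t)=4t+(t-\mu)^{-1}=0$, i.e. $t^2-\mu t+\tfrac14=0$, whose roots are $t_+=\varphi(\mu)/2$ and $t_-=1/(2\varphi(\mu))=(\mu-\sqrt{\mu^2-1})/2$. The relevant one is $t_-$: a direct computation gives $t_--\mu=-\varphi(\mu)/2$, hence $g''(t_-)=4-(t_--\mu)^{-2}=4(\varphi^2-1)/\varphi^2$, while
\begin{equation*}
    e^{-2nt_-^2}(t_--\mu)^{-n}=e^{-\frac n2(\varphi-2\mu)^2}(-1)^n2^n\varphi^{-n},
\end{equation*}
since $\varphi-2\mu=-1/\varphi$ makes the exponent match the target exactly. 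Feeding $t_-$ into the standard saddle-point formula $\int e^{-ng}\,dt\sim e^{-ng(t_-)}\sqrt{2\pi/(ng''(t_-))}$ and combining with the prefactor, the powers collapse correctly: $2^n(2n)^{-n/2}=\sqrt{2/n}^{\,n}$, the factor $\varphi^{-n}$ together with the $\varphi$ from $\sqrt{1/g''(t_-)}$ gives $\varphi^{-(n-1)}$, and the remaining constants reduce to $1/(\sqrt\pi\sqrt{1-\varphi^2})$, so that up to the overall sign and phase one recovers precisely \eqref{eq asymptotics of w^n-1musqrt2n}.

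The hard part will be the rigorous contour deformation and the attendant phase/branch bookkeeping. I would have to deform $\Gamma'$ onto the path of steepest descent through $t_-$ without crossing the pole at $t=\mu$ or the second saddle $t_+$, and then show that the contribution away from a shrinking neighbourhood of $t_-$ is exponentially smaller and that the local quadratic approximation produces a relative error $O(1/\sqrt n)$. This step is genuinely delicate because at $\mu=0$ the two saddles sit symmetrically at $\pm i/2$ with equal modulus of $e^{-ng}$, so the selection of $t_-$ is forced by the geometry of $\Gamma_z^-$ (which passes below the pole, into the lower half-plane where $t_-$ lies) rather than by a size comparison; this is exactly what ties the superscript ``$-$'' to the saddle $t_-$. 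The orientation of the steepest-descent path fixes the branch of $\sqrt{g''(t_-)}$, and this is what converts the computed factor $(-1)^n$ into the asserted $(-1)^{n-1}$ and pins down the branch of $\sqrt{1-\varphi^2}$; I would verify this at $\mu=0$ and extend by continuity. Finally, for uniformity I would use that $\varphi(\mu)$ is analytic near $\mu=0$ with $\varphi(0)=i$, so for $|\mu|<\mu_0$ with $\mu_0$ small the two saddles stay separated from each other and from the pole, and $g''(t_-)=4(\varphi^2-1)/\varphi^2$ stays bounded away from $0$ (at $\mu=0$ one has $\varphi^2-1=-2$). Hence the steepest-descent contour and all the estimates depend continuously on $\mu$ and the error term $O(1/\sqrt n)$ is uniform on $|\mu|<\mu_0$, as claimed.
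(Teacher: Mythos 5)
Your proposal is correct and is essentially the paper's own proof: the paper likewise applies the saddle-point (steepest-descent) method to the contour-integral representation of $w^{(n-1)}$, rescales so that $n$ multiplies a fixed phase function, selects the saddle dictated by the position of the contour relative to the pole, and reduces everything to a uniform Laplace-type estimate with relative error $O(1/\sqrt n)$. The differences are cosmetic: the paper works with the reflected representation over $\Gamma^+_{-\mu\sqrt{2n}}$ and normalizes the pole to $z=0$, so its saddle sits at $\varphi(\mu)\approx i$ (your $t_-=1/(2\varphi(\mu))\approx -i/2$ is its image under $\zeta\mapsto-\zeta$), and it carries out your deferred ``hard part'' via the substitution $s=(z-\varphi(\mu))/a(\mu)$ followed by a three-region estimate of $\int_{-\infty}^{+\infty} e^{nh(s,\mu)}\,ds$.
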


\begin{proof}
One has from \eqref{eq error function}:
\begin{equation*}
    w^{(n-1)}(\mu\sqrt{2n})=\frac{(-1)^n(n-1)!}{\pi i}\int_{\Gamma_{-\mu\sqrt{2n}}^+}\frac{e^{-\zeta^2}d\zeta}{(\zeta+\mu\sqrt{2n})^{n}},
\end{equation*}
for the contour $\Gamma_z^+$ see Figure \ref{fig gamma pm z}.
Taking $\zeta=(z-\mu)\sqrt{\frac n2}$, one obtains:
\begin{equation*}
    w^{(n-1)}(\mu\sqrt{2n})=(-1)^n\sqrt{\frac2n}^{\: n-1}\frac{(n-1)!}{\pi i}\int_{\Gamma_0^+}\frac{e^{-\frac n2(z-2\mu)^2}}{z^n}dz.
\end{equation*}
Let us denote
\begin{equation*}
    f(z,\mu):=-\frac{(z-2\mu)^2}2-\ln z.
\end{equation*}
This function has a critical point $z=\varphi(\mu)$ (the point where its
derivative with respect to $z$ turns to zero). Due to Taylor's expansion, for every $\mu$
\begin{equation*}
    f(z,\mu)=f(\varphi(\mu),\mu)+\frac{f''(\varphi(\mu),\mu)}2(z-\varphi(\mu))^2+O(z-\varphi(\mu))^3
\end{equation*}
as $z\rightarrow \varphi(\mu)$. Let us denote
\begin{equation*}
    \begin{array}{c}
    a(\mu):=\sqrt{\frac{-2}{f''(\varphi(\mu),\mu)}}=\sqrt{\frac{2\varphi^2(\mu)}{\varphi^2(\mu)-1}}, \\
    s:=\frac{z-\varphi(\mu)}{a(\mu)},\\
    h(s,\mu):=f(a(\mu)s+\varphi(\mu),\mu)-f(\varphi(\mu),\mu)
    \end{array}
\end{equation*}
and change the variable in the integral. Then one has to integrate
over the contour $\{s=\frac{z-\varphi(\mu)}{a(\mu)},
z\in\Gamma_0^+\}$, which can be transformed into the real line for
values of $\mu$ small enough (since $\varphi(\mu)\rightarrow i$
and $a(\mu)\rightarrow1$ as $\mu\rightarrow0$, so the point
$s=-\frac{\varphi(\mu)}{a(\mu)}\rightarrow-i$ corresponds to the
point $z=0$). One will have:
\begin{multline*}
    w^{(n-1)}(\mu\sqrt{2n})=(-1)^n\sqrt{\frac2n}^{\: n-1}\frac{(n-1)!}{\pi i}a(\mu)e^{nf(\varphi(\mu),\mu)}\int_{-\infty}^{+\infty}e^{nh(s,\mu)}ds
    \\
    =(-1)^{n-1}\sqrt{\frac2n}^{\: n-1}\frac{\sqrt 2(n-1)!}{\pi\sqrt{1-\varphi^2(\mu)}}\frac{e^{-\frac n2(\varphi(\mu)-2\mu)^2}}{(\varphi(\mu))^{n-1}}
    \int_{-\infty}^{+\infty}e^{nh(s,\mu)}ds.
\end{multline*}
It remains to prove the following lemma.

\begin{lem}\label{lem steepest descent estimates}
There exists $\mu_1$ such that
\begin{equation*}
    \int_{-\infty}^{+\infty}e^{nh(s,\mu)}ds=\sqrt{\frac{\pi}n}+O\l(\frac1n\r)\as\ninf
\end{equation*}
uniformly with respect to $|\mu|<\mu_1$.
\end{lem}

\begin{proof}
We divide the proof into three parts.

1. Let us see that
\begin{equation*}
    \int_{-n^{-3/8}}^{n^{-3/8}}e^{nh(s,\mu)}ds=\sqrt{\frac{\pi}n}+O\left(\frac1n\right)\as\ninf
\end{equation*}
uniformly with respect to $\mu$ in some neighbourhood of $0$. One
has:
\begin{equation*}
    h(s,\mu)=-\frac{(a(\mu)s)^2}2-a(\mu)s(\varphi(\mu)-2\mu)-\ln{\left(1+\frac{a(\mu)s}{\varphi(\mu)}\right)}.
\end{equation*}
Note that
\begin{equation*}
    h(0,\mu)\equiv0,\ h_s'(0,\mu)\equiv0,\
    h_{ss}''(0,\mu)\equiv-2.
\end{equation*}
Hence for every $k\ge0$
\begin{equation*}
    \frac{\partial^{k}h}{\partial\mu^{k}}(0,0)
    =\frac{\partial^{k+1}h}{\partial s\partial\mu^{k}}(0,0)
    =\frac{\partial^{k+3}h}{\partial s^2\partial\mu^{k+1}}(0,0)=0.
\end{equation*}
The function $h(s,\mu)$ is $C^{\infty}$ at $(0;0)$, so
\begin{equation*}
    h(s,\mu)=-s^2+O(s^3)\as s,\mu\rightarrow0
\end{equation*}
(i.e., there exist $C_1,\delta_1$ such that if
$|s|,|\mu|<\delta_1$, then $|h(s,\mu)+s^2|<C_1|s|^3$). This
obviously in particular means that there exists $\delta_0>0$ such
that if $-\delta_0<s<\delta_0$ and $|\mu|<\delta_0$, then
\begin{equation}\label{est estimates}
    \left\{
    \begin{array}{l}
      |h(s,\mu)+s^2|<C_1|s|^3\\
      \Re\ h(s,\mu)<-\frac{s^2}2.\\
    \end{array}
    \right.
\end{equation}
One has:
\begin{multline*}
    \int_{-n^{-3/8}}^{n^{-3/8}}e^{nh(s,\mu)}ds-\sqrt{\frac{\pi}n}
    \\
    =\int_{-n^{-3/8}}^{n^{-3/8}}(e^{nh(s,\mu)}-e^{-ns^2})ds-
    \left(\int_{-\infty}^{-n^{-3/8}}+\int_{n^{-3/8}}^{+\infty}\right)e^{-ns^2}ds.
\end{multline*}
Since for every $\alpha,\beta>0$,
\begin{equation}\label{est known estimate}
    \int_{x}^{+\infty}t^{\alpha}e^{-\beta t^2}dt=O(x^{\alpha+1}e^{-\beta x^2})\as x\rightarrow+\infty,
\end{equation}
one has:
\begin{equation*}
    \left(\int_{-\infty}^{-n^{-3/8}}+\int_{n^{-3/8}}^{+\infty}\right)e^{-ns^2}ds=O\l(\frac1n\r)\as\ninf.
\end{equation*}
Let $|s|<\min\{n^{-\frac38};\delta_0\}$ and $|\mu|<\delta_0$. Then
\begin{equation*}
    n|h(s,\mu)+s^2|<C_1n|s|^3<\frac{C_1}{n^{\frac18}}
\end{equation*}
and there exists $N_1$ such that
\begin{multline*}
\begin{array}{l}
  \text{if }n>N_1,|s|<n^{-3/8}\text{ and }|\mu|<\delta_0, \\
  \text{then }|e^{n(h(s,\mu)+s^2)}-1|<2C_1n|s|^3. \\
\end{array}
\end{multline*}
Hence we arrive at the following (uniform for $|\mu|<\delta_0$)
estimate:
\begin{multline*}
    \l|\int_{-n^{-3/8}}^{n^{-3/8}}(e^{nh(s,\mu)}-e^{-ns^2})ds\r|\le\int_{-n^{-3/8}}^{n^{-3/8}}e^{-ns^2}|e^{n(h(s,\mu)+s^2)}-1|ds
    \\
    <2C_1n\int_{-n^{-3/8}}^{n^{-3/8}}|s|^3e^{-ns^2}ds=O\l(\frac1n\r)\as\ninf
\end{multline*}
from \eqref{est known estimate}.

2. The following is an immediate consequence of \eqref{est
estimates} and \eqref{est known estimate}: for $|\mu|<\delta_0$,
\begin{multline*}
    \left|\left(\int_{-\delta_0}^{-n^{-3/8}}+\int_{n^{-3/8}}^{\delta_0} \right)
    e^{nh(s,\mu)}ds\right|<2\int_{n^{-3/8}}^{\delta_0}e^{-\frac{ns^2}2}ds
    \\
    <\frac2{\sqrt n}\int_{n^{1/8}}^{+\infty}e^{-\frac{t^2}2}dt
    =O\l(\frac1n\r)\as\ninf
\end{multline*}
uniformly with respect to $\mu$.

3. Let us prove that
\begin{equation*}
    \l(\int_{-\infty}^{-\delta_0}+\int_{\delta_0}^{+\infty}\r)e^{nh(s,\mu)}ds=O\left(\frac1n\right)\as\ninf
\end{equation*}
uniformly with respect to $\mu$ in some neighbourhood of $0$.
Consider the real part of the last term in
\begin{equation}\label{def h}
    h(s,\mu)=-\frac{(a(\mu)s)^2}2-a(\mu)s(\varphi(\mu)-2\mu)-\ln{\left(1+\frac{a(\mu)s}{\varphi(\mu)}\right)}.
\end{equation}
One has
\begin{equation*}
    \Re\ln{\left(1+\frac{a(\mu)s}{\varphi(\mu)}\right)}=\ln|i+\Gamma(\mu)s|,
\end{equation*}
\begin{figure}[h]
    \includegraphics[width=8 cm]{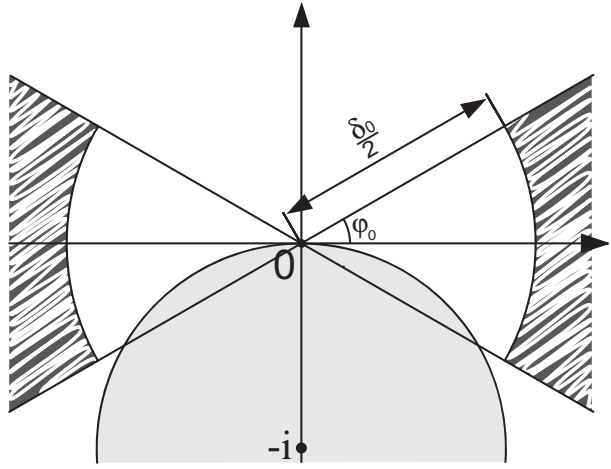}\\
    \caption{The plane of the parameter $s$}
    \label{fig circle}
\end{figure}
where
\begin{equation*}
    \gamma(\mu):=\frac{ia(\mu)}{\varphi(\mu)}.
\end{equation*}
Consider $s\in(-\infty;-\delta_0)\cup(\delta_0;+\infty)$. There
exists an angle $\varphi_0$ (small enough) such that the domains
shown on Figure \ref{fig circle} do not intersect. Since
$\gamma(\mu)\rightarrow1$ as $\mu\rightarrow0$, there exists
$\mu_1<\delta_0$ such that if $|\mu|<\mu_1$, then
$|\gamma(\mu)|>\frac12$ and $|\arg\gamma(\mu)|<\varphi_0$. Then
$|i+\gamma(\mu)s|>1$. Let $\theta:=\frac13$. By the choice of
$\mu_1$ we can also ensure that if $|\mu|<\mu_1$, then
\begin{equation*}
    \l\{
    \begin{array}{l}
        \Re\, a^2(\mu)>\frac12, \\
        \Re[a(\mu)(\varphi(\mu)-2\mu)]>-\frac{\delta_0\theta}2 \\
    \end{array}
    \r.
\end{equation*}
and hence
\begin{equation*}
    \Re h(s,\mu)<-\frac14(s^2-2s\delta_0\theta)
\end{equation*}
for every real $s$ such that $|s|>\delta_0$. One has:
\begin{multline*}
    \left|\left[\int_{-\infty}^{-\delta_0}+\int_{\delta_0}^{+\infty}\right]
    e^{nh(s,\mu)}ds\right|<2\int_{\delta_0}^{+\infty}e^{-\frac n4(s^2-2s\delta_0\theta)}ds
    \\
    =2e^{\frac n4\delta_0^2\theta^2}\int_{\delta_0(1-\theta)}^{+\infty}e^{-\frac n4s^2}ds
    =O\l(e^{\frac n4\delta_0^2(2\theta-1)}\r)=O\l(\frac1n\r)\as\ninf
\end{multline*}
uniformly with respect to $\mu$. This completes the proof of the
lemma.
\end{proof}

\end{proof}

As a corollary we have asymptotics of $w^{(n-1)}(z)$ as $\ninf$ for fixed $z$.

\begin{cor}\label{cor asymptotics of w^(n-1)(z)}
    \begin{equation*}
        w^{(n-1)}(z)=\sqrt{\frac2n}^{\: n}\frac{(n-1)!i^{n-1}}{\sqrt{2\pi}}e^{\frac n2+iz\sqrt{2n}-\frac{z^2}2}\l(1+O\l(\frac1{\sqrt n}\r)\r)\as\ninf
    \end{equation*}
    uniformly with respect to $z$ in every bounded set in $\mathbb C$.
\end{cor}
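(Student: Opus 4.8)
The plan is to specialize Theorem \ref{thm asymptotics of w^n-1musqrt2n} to the scaling $\mu=\frac{z}{\sqrt{2n}}$, for which $\mu\sqrt{2n}=z$. Fix a bounded set $K\subset\C$ and put $M:=\sup_{z\in K}|z|$. Then $|\mu|\le\frac M{\sqrt{2n}}\to0$ uniformly in $z\in K$, so there is $N$ with $|\mu|<\mu_0$ for all $n>N$ and all $z\in K$; hence \eqref{eq asymptotics of w^n-1musqrt2n} applies with its $O(\frac1{\sqrt n})$ error uniform in $z\in K$. It then remains to expand each $\mu$-dependent factor of that formula as $\mu\to0$ and to check that the exponents and prefactors assemble into the claimed expression.

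First I would record the expansion of the inverse Zoukowski function near $0$. Since $\varphi(\mu)=\mu+\sqrt{\mu^2-1}$ with $\varphi(0)=i$, writing $\sqrt{\mu^2-1}=i\sqrt{1-\mu^2}$ gives
\begin{equation*}
    \varphi(\mu)=i+\mu-\frac{i\mu^2}2+O(\mu^3),\qquad
    \varphi^2(\mu)=-1+2i\mu+2\mu^2+O(\mu^3)\as\mu\to0.
\end{equation*}
From the second expansion $1-\varphi^2(\mu)=2(1+O(\mu))$, so the amplitude factor obeys $\frac1{\sqrt\pi\sqrt{1-\varphi^2(\mu)}}=\frac1{\sqrt{2\pi}}\l(1+O(\mu)\r)=\frac1{\sqrt{2\pi}}\l(1+O\l(\frac1{\sqrt n}\r)\r)$, which supplies the constant $\frac1{\sqrt{2\pi}}$ of the corollary.

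Next I would treat the two factors that grow in $n$. For the power, factoring $i$ out of $\varphi$ yields $\frac{-1}{\varphi(\mu)}=i\,g(\mu)$ with $g(\mu)=1+i\mu-\frac{\mu^2}2+O(\mu^3)$, and the decisive observation is the cancellation of the quadratic term in the logarithm, $\ln g(\mu)=i\mu+O(\mu^3)$. Consequently $(n-1)\ln g(\mu)=iz\frac{n-1}{\sqrt{2n}}+(n-1)O(\mu^3)=\frac{iz\sqrt{2n}}2+O\l(\frac1{\sqrt n}\r)$, so
\begin{equation*}
    \frac{(-1)^{n-1}}{\varphi^{n-1}(\mu)}=\l(\frac{-1}{\varphi(\mu)}\r)^{n-1}
    =i^{n-1}e^{\frac{iz\sqrt{2n}}2}\l(1+O\l(\frac1{\sqrt n}\r)\r).
\end{equation*}
For the Gaussian factor, $\varphi(\mu)-2\mu=i-\mu-\frac{i\mu^2}2+O(\mu^3)$ gives $(\varphi(\mu)-2\mu)^2=-1-2i\mu+2\mu^2+O(\mu^3)$, whence
\begin{equation*}
    -\frac n2(\varphi(\mu)-2\mu)^2=\frac n2+ni\mu-n\mu^2+O(n\mu^3)
    =\frac n2+\frac{iz\sqrt{2n}}2-\frac{z^2}2+O\l(\frac1{\sqrt n}\r).
\end{equation*}

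Finally, multiplying the three contributions together against the common prefactor $\sqrt{\frac2n}^{\,n}(n-1)!$, the two half-exponents $\frac{iz\sqrt{2n}}2$ add to $iz\sqrt{2n}$ and the remaining terms combine to $\frac n2-\frac{z^2}2$, reproducing $\sqrt{\frac2n}^{\,n}\frac{(n-1)!\,i^{n-1}}{\sqrt{2\pi}}e^{\frac n2+iz\sqrt{2n}-\frac{z^2}2}(1+O(\frac1{\sqrt n}))$ exactly. The main obstacle is precisely the cancellation $\ln g(\mu)=i\mu+O(\mu^3)$: a surviving $\mu^2$ term would contribute $(n-1)\cdot\mathrm{const}\cdot\frac{z^2}{2n}=O(1)$ to the exponent and spoil the result, so this step must be verified carefully; once it holds, everything else is routine bookkeeping of $O(\frac1{\sqrt n})$ remainders, uniform in $z\in K$ because the Taylor remainders above are uniform for $\mu$ near $0$.
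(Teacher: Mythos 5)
Your proposal is correct and takes essentially the same route as the paper: the paper's proof is exactly the substitution $\mu=\frac{z}{\sqrt{2n}}$ into Theorem \ref{thm asymptotics of w^n-1musqrt2n} followed by the ``tedious calculation'' with the expansion $\varphi(\mu)=i+\mu-\frac{i\mu^2}2+O(\mu^4)$, which you have carried out explicitly and accurately, including the key cancellation of the quadratic term in $\ln\bigl(-1/\varphi(\mu)\bigr)-\ln i$. Your slightly weaker remainder $O(\mu^3)$ for $\varphi$ suffices, since after multiplication by $n-1$ it still contributes only $O(n^{-1/2})$ to the exponent.
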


\begin{proof}
We just need to substitute $\mu=\frac z{\sqrt{2n}}$ into \eqref{eq
asymptotics of w^n-1musqrt2n} and go through tedious calculation,
using that
\begin{equation*}
    \varphi(z)=i+z-\frac{iz^2}2+O(z^4)\as z\rightarrow0.
\end{equation*}
\end{proof}

\section{Acknowledgement}
The author expresses his deep gratitude to Dr. A.V. Kiselev for
valuable discussions of the problem and to Prof. S.N. Naboko for
his constant attention to this work and for many fruitful
discussions of the subject. The author also wishes to thank Prof.
V.S. Buslaev for constructive criticism. The work was supported by
grants RFBR-06-01-00249 and INTAS-05-1000008-7883, and also by
Vladimir Deich prize.

\end{document}